\documentclass[11pt]{article} 

\usepackage{amsmath}
\usepackage{amsthm}
\usepackage{amsfonts}
\usepackage{mathrsfs}
\usepackage{stmaryrd}
\usepackage{setspace}
\usepackage{fullpage}
\usepackage{amssymb}
\usepackage{breqn}
\usepackage{enumitem}
\usepackage{bbold} 
\usepackage{authblk}
\usepackage{comment}
\usepackage{hyperref}
\usepackage{pgf,tikz}
\usepackage{graphicx}
\usepackage{xcolor}
\usepackage{caption}
\usetikzlibrary{decorations.pathreplacing,arrows}
\usetikzlibrary{positioning,chains,fit,shapes,calc}
\tikzstyle{vertex}=[circle,draw=black,fill=black,inner sep=0,minimum size=3pt,text=white,font=\footnotesize]
\usetikzlibrary{graphs}
\usetikzlibrary{shapes.symbols}

\newtheorem*{rep@theorem}{\rep@title}
\newcommand{\newreptheorem}[2]{%
\newenvironment{rep#1}[1]{%
 \def\rep@title{#2 \ref{##1}}%
 \begin{rep@theorem}}%
 {\end{rep@theorem}}}
\makeatother

\numberwithin{equation}{section}
\theoremstyle{plain}

\newtheorem*{proposition*}{Proposition}

\newtheorem{thm}{Theorem}[section]
\newtheorem{theorem}{Theorem}[section]

\newtheorem{lemma}[thm]{Lemma}
\newtheorem{conj}[thm]{Conjecture}
\newtheorem{proposition}[thm]{Proposition}
\newtheorem{prop}[thm]{Proposition}

\newtheorem{defn}[thm]{Definition}

\newtheorem{claim}[thm]{Claim}
\newtheorem{obs}{Observation}

\newtheorem*{theorem*}{Theorem}
\newtheorem*{prop*}{Proposition}

\newcommand\ex{\ensuremath{\mathrm{ex}}}

\newcommand\ind{\ensuremath{\mathrm{ind}}}

\newcommand\cB{{\mathcal B}}

\newcommand\cP{{\mathcal P}}

\newcommand{\ignore}[1]{}

\title{Induced planar Tur\'an numbers}

\author{Ervin Gy\H{o}ri\footnote{HUN-REN, Alfr\'ed R\'enyi Institute of Mathematics, Budapest, Hungary. Email:\texttt{gyori@renyi.hu}} \quad and \quad Hilal Hama Karim$^{*}$\footnote{Department of Computer Science and Information Theory, Faculty of Electrical Engineering and Informatics, Budapest University of Technology and Economics, Műegyetem rkp. 3., H-1111 Budapest, Hungary. E-mail: \texttt{hilal.hamakarim@edu.bme.hu}}}

\date{}

\begin{document}

\maketitle

\begin{abstract}
    The planar Tur\'an number of a graph $F$ is the maximum number of edges an $n$-vertex $F$-free planar graph can have. We study the case where $F$ is forbidden as an induced subgraph, thereby introducing the induced planar Tur\'an numbers. We will determine a sharp upper bound when $F$ is $\Theta_4$, a $4$-cycle with a diagonal edge, and obtain exact extremal values in case $F$ is a path $P_k$ on $k$ vertices, for $k=3,4$ and $5$.
\end{abstract}

\section{Introduction}
   The Tur\'an number of a graph $F$, $\ex(n,F)$, is the maximum possible number of edges in a graph on $n$ vertices that does not contain $F$ as a (not necessarily induced) subgraph. Ever since the seminal work of Tur\'an \cite{turan41} in 1941, in which he determined the Tur\'an number of $K_k$, for every $k\geq 3$, this has been a major area of research in extremal combinatorics. In fact, if $F$ is not a clique and one desires to forbid $F$ as an induced subgraph, then the answer is trivially $\binom{n}{2}$, since complete graphs always avoid induced copies of $F$. Nevertheless, induced Tur\'an numbers have been introduced and studied through adding various conditions and restrictions rather than the plausible way. We refer the interested reader to  
   \cite{Axenovichzim2025, PromelSteger1991,PromelSteger1992,PromelSteger1993,Lohtaitimzh2018}. On the other hand, in the planar variant of the Tur\'an problem, it turned out that requiring the forbidden subgraph to be induced makes a non-trivial and interesting problem.
   
   The planar Tur\'an number of $F$, denoted by $\ex_\cP(n,F)$, is the maximum possible number of edges in an $n$-vertex planar graph that does not contain $F$ as a subgraph. This was initiated by Dowden \cite{Dowden2016} in 2016, who determined sharp upper bounds for $\ex_\cP(n, C_4)$ and $\ex_\cP(n, C_5)$. This variant of the classical Tur\'an problem already offers many interesting and challenging problems, and has been studied for many classes of graphs, for example, cycles \cite{CLidiLiShan2022, GhGyMPX, GyVarZhu2024, ShiWalshYu2025}, paths \cite{LanShi2019}, theta graphs \cite{GuGyLuWaYa2026, LanShiSongTheta2019, XiaoGoGyPauZmo2024}.
   In this paper we study the planar Tur\'an problem where the forbidden subgraph $F$ is required to be induced, we will call it the \textit{induced planar Tur\'an number} of $F$ and denote it by $\ex_\cP(n,F^\ind)$. For many graphs $F$, we can have a maximal planar graph, which we call a triangulation, that does not contain $F$ as an induced subgraph. However, this is still not as trivial as the non-planar case, since in general there are many different triangulations on $n$ vertices, many of which may not avoid an induced $F$. 
   
   Let us define $\ex_\cP(n, F^\ind)$ precisely. Let $F^\ind \subseteq G$ denote that $G$ contains an induced subgraph isomorphic to $F$. Similarly,  and use $F^\ind \nsubseteq G$ to denote that $G$ does not contain any induced subgraph isomorphic to $F$. In the latter case,  we say $G$ is $F^\ind$-free, or $G$ does not contain an induced copy of $F$. Then,
   $$\ex_\cP(n,F^\ind):= \max \{|E(G)| : \ G \text{ is an} \ n\text{-vertex  planar graph and } F^\ind\text{-free}\}$$
   

Throughout the paper, we denote by $K_n$ and $\overline{K}_n$ the complete and empty graphs on $n$ vertices, respectively. For two graphs $H$ and $G$,  $H \cup G$ denotes their disjoint union and $H+G$ denotes the graph obtained from $H \cup G$ by adding all the edges between $H$ and $G$. Let $tH$ denote the disjoint union of $t$ copies of $H$. Let $P_k$ and $C_k$ denote a path and a cycle on $k$ vertices, respectively.


Obviously, if $G$ is $F$-free, then it is $F^\ind$-free, and hence $\ex_\cP(n,F^\ind) \geq \ex_\cP(n,F)$, for any graph $F$. Observe that characterizing those graphs for which the planar Tur\'an number is $3n-6$ is an interesting and challenging problem, see \cite{LanShiSong2019} for some sufficient conditions. Here, in the induced version, we provide some sufficient conditions, which we believe to be an exhaustive list.

For complete graphs, any copy is an induced copy. Then $\ex_\cP(n, K_3^\ind)=\ex_\cP(n,K_3)=2n-4$ and $\ex_\cP(n, K_4^\ind)=\ex_\cP(n,K_4)=3n-6$, for all $n\geq 6$. Also, it is easy to see that the triangulation $K_2+P_{n-2}$ does not contain any induced cycle of length $k \geq 4$, which implies that $\ex_\cP(n,C_k^\ind)=3n-6$, for every $k \geq 4$.
Note that the same triangulation does not contain an induced copy of two vertex-disjoint triangles, implying that $\ex_\cP(n,(2K_3)^\ind)=3n-6$. 

Let $S_k$ be a star with $k$ leaves (i.e. it has $k+1$ vertices). Plummer \cite{plummer1990} characterized maximal planar graphs with no induced $S_3$ (which are called claw-free). In particular, for every $n\geq 3$, there are $S_k^\ind$-free triangulations on $n$ vertices, and hence, $\ex_\cP(n,S_3^\ind)=3n-6$. 

Thus, the following proposition follows.

\begin{proposition}\label{3n-6}
    For every $n\geq 6$, $\ex_\cP(n,F^\ind)=3n-6$, if $F$ is one of the following graphs.
    
    \textbf{(i)} $C_k$, for any $k\geq 4$, \quad \textbf{(ii)} $S_3$, \quad \textbf{(iii)} $K_4$, or \quad \textbf{(iv)} $2K_3$.
\end{proposition}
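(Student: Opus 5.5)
The upper bound is immediate: every $n$-vertex planar graph with $n\geq 3$ has at most $3n-6$ edges, so $\ex_\cP(n,F^\ind)\leq 3n-6$ for every $F$. It therefore suffices, for each of the four graphs, to exhibit an $n$-vertex triangulation with no induced copy of $F$. For (iii) no new construction is needed: every copy of $K_4$ is induced, so $\ex_\cP(n,K_4^\ind)=\ex_\cP(n,K_4)=3n-6$, as already noted in the introduction. For (ii) we take the claw-free triangulations guaranteed by Plummer's characterization \cite{plummer1990}. For concreteness, the octahedron and icosahedron are claw-free on $6$ and $12$ vertices, but we rely on \cite{plummer1990} for existence for every $n\geq 6$.

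For (i) and (iv) we use the single triangulation $G=K_2+P_{n-2}$. Let $u,v$ be the two adjacent apex vertices and $p_1p_2\cdots p_{n-2}$ the path. Planarity: draw the path on a line with $u$ above and $v$ below, and route the edge $uv$ around an end of the path. The edge count is $1+2(n-2)+(n-3)=3n-6$.

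For (i), let $C$ be an induced cycle of length $k\geq 4$ in $G$. We argue by cases on how many apices $C$ contains.
\begin{itemize}
\item If $C$ contains no apex, then $C\subseteq P_{n-2}$, which is acyclic, a contradiction.
\item If $C$ contains exactly one apex, say $u$, then $C-u$ is an induced path $p_ip_{i+1}\cdots p_j$ with $j-i\geq 2$. But $u$ is adjacent to the interior vertex $p_{i+1}$, giving a chord.
\item If $C$ contains both $u$ and $v$, these are adjacent, so $uv$ must be an edge of $C$, since $C$ is induced. Every other vertex of $C$ is a path vertex adjacent to both $u$ and $v$, so it gives a chord, unless $k=3$. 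This contradicts $k\geq 4$.
\end{itemize}

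For (iv), suppose $G$ contains two vertex-disjoint triangles $T_1,T_2$ with no edges between them. Each triangle must contain an apex, because $P_{n-2}$ is triangle-free. So one triangle contains $u$ and the other contains $v$. But $uv\in E(G)$, so the pair does not induce $2K_3$.

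I expect no real obstacle, since all four parts reduce to exhibiting a triangulation. The only point needing care is (ii), where existence for all $n\geq 6$ rests on Plummer's result rather than an explicit family. If a self-contained argument were desired, I would try to build claw-free triangulations inductively, for instance by a local vertex-splitting operation that preserves claw-freeness. That step is the only place where real work could arise, and here I would simply invoke \cite{plummer1990}.
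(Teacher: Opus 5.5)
Your proposal is correct and follows the paper's own argument: $K_2+P_{n-2}$ handles (i) and (iv), Plummer's claw-free triangulations handle (ii), and (iii) reduces to $\ex_\cP(n,K_4)=3n-6$; your case analyses just spell out what the paper calls ``easy to see''. If you want (ii) to be self-contained, you can use the cube of a path on $p_1,\dots,p_n$, where $p_ip_j$ is an edge iff $1\le |i-j|\le 3$: it is a stacked triangulation, and it is claw-free because three pairwise non-adjacent neighbours of $p_i$ would need pairwise index gaps of at least $4$ inside the window $[i-3,i+3]$, which has width only $6$.
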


Clearly, if $F^\ind \subseteq H$, then $\ex_\cP(n, H^\ind) \geq \ex_\cP(n, F^\ind)$. Therefore, the following stronger version of Proposition \ref{3n-6} follows immediately.

\begin{proposition}\label{suffconditions}
    For any graph $F$, and every $n\geq 3$, $\ex_\cP(n,F^\ind)=3n-6$ whenever one of the following holds:
    \begin{itemize}
        \item[(i)] $F$ contains a $K_4$.
        \item[(ii)] $F$ contains an induced $C_k$, for $k\geq 4$ (i.e. $F$ is not chordal).
        \item[(iii)] $F$ contains an induced $S_k$, for $k\geq 3$ (i.e. $F$ is not claw-free).
        \item[(iv)] $F$ contains an induced copy of $2K_3$.
    \end{itemize}
\end{proposition}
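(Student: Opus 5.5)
The plan is to combine the monotonicity observation stated just before the proposition with the specific triangulations named in the discussion of Proposition \ref{3n-6}. The upper bound $\ex_\cP(n,F^\ind)\le 3n-6$ is automatic, since every $n$-vertex planar graph with $n\ge 3$ has at most $3n-6$ edges. So in each case we only need an $n$-vertex triangulation $T$ with no induced copy of $F$. The key remark is this: if $H$ is an induced subgraph of $F$ and $T$ has no induced $H$, then $T$ has no induced $F$. Indeed, an induced copy of $F$ in $T$ would contain an induced copy of $H$, because an induced subgraph of an induced subgraph is induced. Hence $\ex_\cP(n,F^\ind)\ge \ex_\cP(n,H^\ind)$, and it suffices to exhibit, for each case, a suitable $H$ with a triangulation avoiding it.

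\textbf{Case (i).} Here $K_4\subseteq F$, and any copy of $K_4$ is induced. We use a $K_4$-free triangulation, so every triangulation avoiding $K_4$ avoids induced $F$. For general $n$, the natural candidates are triangulations with no separating triangles and $n\ne 4$, such as $4$-connected triangulations. A triangulation contains $K_4$ only if it is $K_4$ itself or has a separating triangle. I would check this small point and quote the value $\ex_\cP(n,K_4)=3n-6$ stated before Proposition \ref{3n-6}.

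\textbf{Case (ii).} Take $H=C_k$ induced in $F$ and $T=K_2+P_{n-2}$. Every induced cycle of $T$ has length $3$. An induced cycle meeting an apex vertex $a$ can use only two neighbours of $a$, but every vertex is adjacent to $a$, so the cycle is a triangle. A cycle avoiding both apices lies inside the path $P_{n-2}$, which is acyclic.

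\textbf{Case (iv).} Take $H=2K_3$ and the same $T$. Every triangle of $T$ contains an apex vertex, since $P_{n-2}$ is triangle-free. Two vertex-disjoint triangles must therefore use the two apices separately, and the apices are adjacent. So the two triangles are joined by an edge and do not form an induced $2K_3$.

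\textbf{Case (iii).} Take $H=S_3$, which is an induced subgraph of $S_k$ for $k\ge 3$ (drop $k-3$ leaves) and hence an induced subgraph of $F$. Plummer's claw-free triangulations, cited before Proposition \ref{3n-6}, provide $T$.

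The main obstacle is the range of $n$. Proposition \ref{suffconditions} claims $n\ge 3$, while Proposition \ref{3n-6} is stated only for $n\ge 6$. For small $n$ I would check directly. For $n=3$, the triangle contains no copy of $F$ whenever $|V(F)|\ge 4$, and every $F$ in cases (i)–(iv) has at least $4$ vertices. For $n=4$ and $5$, I would use $K_2+P_{n-2}$ in cases (ii) and (iv), and check claw-freeness of small triangulations by hand in case (iii). In case (i), $n=4$ is problematic because the unique triangulation is $K_4$ itself. There the statement needs either $F\ne K_4$ with more vertices, which gives a trivial bound, or the caveat $n\ge 6$, matching the quoted $\ex_\cP(n,K_4)$ value. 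I would flag this boundary case rather than force it.
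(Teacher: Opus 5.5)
Your proposal is correct and follows the same route as the paper: the monotonicity $\ex_\cP(n,F^\ind)\ge\ex_\cP(n,H^\ind)$ for $H$ induced in $F$, combined with the triangulations behind Proposition~\ref{3n-6} ($K_2+P_{n-2}$ for cases (ii) and (iv), Plummer's claw-free triangulations for (iii), and a $K_4$-free triangulation such as $\overline{K}_2+C_{n-2}$ for (i)). Your boundary flag is justified and also applies at $n=5$, since the unique $5$-vertex triangulation $K_5-e$ contains $K_4$; so case (i) genuinely needs $n\ge 6$ (or $n=3$), as in Proposition~\ref{3n-6}, which the paper's ``$n\geq 3$'' overlooks.
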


The full characterization of graphs $F$ for which $\ex_\cP(n, F^\ind) = 3n-6$ remains an interesting open question. We conjecture that the converse of Proposition \ref{suffconditions} is also true, that is, $\ex_\cP(n,F^\ind)=3n-6$ if and only if one of the the above four conditions holds for $F$.

Note that any complete bipartite graph on $n\geq 4$ vertices contains either an induced $S_3$ or an induced $C_4$, and hence, by proposition \ref{suffconditions}, the induced planar Tur\'an number for them is also $3n-6$.

Theta graphs $\Theta_k$ are those consist of a cycle $C_k$ with one extra edge joining two non-consecutive vertices on the cycle. 
Using the concept of triangular blocks, a contribution method introduced by Ghosh, Gy\H{o}ri, Martin, Paulos and Xiao \cite{GhGyMPX}, we determine an upper bound for $\ex_\cP(n, \Theta_4^\ind)$ that is sharp for infinitely many values of $n$.

\begin{theorem}\label{theta4}
    For every $n\geq 5$, $\ex_\cP(n,\Theta_4^\ind)\leq \frac{8}{3}(n-2)$. Moreover, this bound is sharp for every $n\equiv 20 \ (mode \ 36)$.
\end{theorem}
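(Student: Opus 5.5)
Throughout, $G$ is an $n$-vertex planar $\Theta_4^\ind$-free graph. We may take $G$ edge-maximal among such graphs; we also assume $G$ is connected, since components can be joined by bridges without creating triangles and hence without creating a $\Theta_4$. Fix a plane embedding. We use the triangular block decomposition of \cite{GhGyMPX}: every edge lies in exactly one triangular block, where a block is a maximal subgraph obtained by repeatedly gluing triangles along shared edges, or else is a single edge lying in no triangle.

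\textbf{Step 1: structure of the blocks.} Two triangles sharing an edge $xy$, say $xyz$ and $xyw$, form a $\Theta_4$. This copy is induced unless $zw\in E(G)$, in which case $\{x,y,z,w\}$ spans a $K_4$. From this we want to show that every triangular block $B$ is one of the following: a single edge $K_2$, a single triangle $K_3$, or a $K_4$. To rule out larger blocks, suppose a further triangle is glued to a $K_4$ on $\{x,y,z,w\}$, say $xyu$ with $u\notin\{z,w\}$. Freeness then forces $uz$ and $uw$ to be edges, so $\{x,y,z,w,u\}$ induces $K_5$, which is non-planar. Hence the blocks are exactly $K_2$, $K_3$ and $K_4$.

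\textbf{Step 2: contribution counting.} Let $k_4$, $k_3$, $k_2$ be the numbers of blocks of each type and $f$ the number of faces. Euler's formula gives $e = n + f - 2$. Each face is a triangle or has length at least $4$. A $K_4$ block contributes $6$ edges and at most $3$ triangular faces; its fourth triangle is separating or outer, but we count conservatively. A $K_3$ block contributes $3$ edges and at most $2$ triangular faces; the count of $2$ occurs only when $G$ is $K_3$ itself, so in general it is at most $1$. Every edge lies on at most two faces.

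So we bound $2e \ge 3f_3 + 4f_{\ge 4}$, where $f_3 \le 3k_4 + k_3 + O(1)$, together with $e = 6k_4 + 3k_3 + k_2$. We then optimise. The extremal ratio should come from $K_4$ blocks: each has $6$ edges and about $3$ triangular faces. Counting faces gives $f \le f_3 + (2e - 3f_3)/4 = (2e + f_3)/4$. Since $f_3 \le e/2$ (the worst case, achieved when all blocks are $K_4$'s), we get $f \le (2e + e/2)/4 = 5e/8$. Then $e \le n - 2 + 5e/8$, so $e \le \frac{8}{3}(n-2)$. The key lemma to verify is therefore that the number of triangular faces is at most $e/2$. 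This amounts to each block contributing at most half its edges as triangular faces: a $K_4$ gives at most $3$ of $6$ when embedded as a non-outer block, and a $K_3$ gives $1$ of $3$. The remaining subtlety is the case where all four faces of a $K_4$ are triangular faces, which happens only when $G = K_4$. I expect this accounting to be the main obstacle: one must handle boundary cases where a $K_4$ is the whole graph or bounds the outer face, and make sure no $K_4$ contributes all four of its faces in a larger graph.

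\textbf{Step 3: sharpness.} Equality requires every block to be a $K_4$ with exactly three triangular faces and every other face to be a $4$-face. Moreover, no induced $\Theta_4$ may arise across blocks, since two triangles from different blocks share no edge. The plan is to take a $4$-regular quadrangulation-like arrangement in which each $K_4$ is placed so that its outer triangle's edges each border a quadrilateral face. A concrete choice is to replace each vertex of a suitable cubic planar graph by a $K_4$ whose outer vertices attach along $4$-cycles, and to adjust parameters so that all non-triangular faces are exactly quadrilaterals. The divisibility condition $n \equiv 20 \pmod{36}$ should emerge from the counting: $n - 2$ must be divisible by $3$ and by the periodicity of the gadget. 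I would identify the gadget with $n = 20$ and extend it periodically by $36$ vertices at a time.
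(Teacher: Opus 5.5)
Your upper bound is correct, but it is organised differently from the paper's. The paper builds blocks from \emph{facial} triangles only, so $\Theta_4$ blocks also occur. It lets each $\ell$-face give $1/\ell$ to each of its edges and verifies the local inequality $8f(B)-5e(B)\le 0$ block by block.

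You build blocks from all triangles, which gives the shorter list $K_2,K_3,K_4$ directly. You then count globally via $4f-f_3\le 2e$ and $f_3\le k_3+3k_4\le e/2$. That last inequality is not really an obstacle and needs no $O(1)$ term. After your reduction to connected $G$ with $n\ge 5$, a triangle bounds a face on at most one side. For a $K_4$ block, some vertex outside it lies in one of its four triangular regions, so at most three of its triangles are faces; whether that region is the outer face plays no role.

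Your version is shorter and exposes the equality structure. The paper's per-block form is the general template of the contribution method it borrows.

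The genuine gap is the second half of the statement. Sharpness for every $n\equiv 20\pmod{36}$ is an existence claim, and Step~3 exhibits no graph. Your equality conditions are right: no $K_2$ or $K_3$ blocks, each $K_4$ with exactly three facial triangles, and all other faces quadrilaterals. But the suggested constructions do not produce examples:
\begin{itemize}
\item A $4$-regular quadrangulation does not exist, since a quadrangulation has only $2n-4$ edges (average degree below $4$).
\item If the $K_4$'s replacing the vertices of a cubic graph are joined by the original edges, as in the usual vertex replacement, those edges lie in no triangle. They are then $K_2$ blocks and equality fails.
\item If adjacent $K_4$'s are instead meant to share vertices (the medial-graph reading), you need a cubic plane graph all of whose faces are quadrilaterals. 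Euler's formula forces such a graph to have exactly $12$ edges, giving only $n=20$.
\end{itemize}
Nor does the modulus ``emerge from the counting''. Equality gives $e=6k_4$, $f_3=3k_4$, $f_4=3k_4/4$, hence $n-2=9k_4/4$, which forces only $n\equiv 2\pmod 9$. The period $36$ comes from a specific construction.

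What is missing is an explicit family. For $n=20$, take the cuboctahedron and add a new vertex in each of its $8$ triangular faces, joined to the three corners. Every edge of the cuboctahedron lies on exactly one triangle and one square, and its only triangles are those $8$ faces. So every triangle of the new graph lies in one of $8$ edge-disjoint $K_4$'s. Hence any two triangles sharing an edge span a $K_4$, there is no induced $\Theta_4$, and the graph has $48=\frac{8}{3}(20-2)$ edges. This is the paper's $G_0$.

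The paper then iterates. It puts $G_{k-1}$ inside the inner quadrilateral face of a fresh copy of $G_0$ and fills the annulus with $16$ new vertices. Together with corners of the two bounding squares, these form $8$ further $K_4$ blocks, and every other new face is a quadrilateral, so each step adds $36$ vertices. You would need to supply a construction of this kind and check that the gluing creates no triangle outside the designated $K_4$'s.
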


Note that for every $k\geq 5$, any $\Theta_k$ contains an induced cycle of length at least $4$. Then, by Proposition \ref{suffconditions}, $\ex_\cP(n, \Theta_k^\ind)=3n-6$, for every $k\geq 5$. Notice that $\Theta_k$ is a collections of graphs that contains more than one graph if $k\geq 6$, but still any one of them contains an induced cycle of length at least $4$. Hence, whether we forbid the whole collection or any single member of $\Theta_k$, the extremal value is still $3n-6$.




Next we will consider paths, which are probably the most interesting class of graphs to deal with. Even in the non-induced version, $\ex_\cP(n,P_k)$ is known only for $k \leq 11$ \cite{LanShi2019}. Observe also that if a tree contains a vertex of degree three, then it contains an induced $S_3$, which means that paths are the only non-trivial trees to consider in the induced setting. We determine the exact value for induced paths  on $3, 4$  and $5$ vertices. 

\begin{theorem}\label{P3} For every $n \geq 1$,
$\ex_\cP(n,P_3^\ind)=\left\{
\begin{array}{lll}
      \frac{6}{4}n & , & n \equiv 0 \ (\text{mode }4) \\
      \frac{6}{4}(n-1) & , & n \equiv 1 \ (\text{mode }4) \\
      \frac{6}{4}(n-2)+1 & , & n\equiv 2 \ (\text{mode }4)\\
      \frac{6}{4}(n-3)+3 & , & n \equiv 3 \ (\text{mode }4)\\
      \end{array} \right.$

      Moreover, the unique extremal graph is the disjoint union of  $\lfloor \frac{n}{4}\rfloor$ copies of $K_4$ and maybe a smaller clique on the remaining vertices.
\end{theorem}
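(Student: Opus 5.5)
The plan rests on the standard fact that a graph has no induced $P_3$ if and only if every connected component is a clique. Indeed, if a component is not complete, it contains two nonadjacent vertices at distance exactly $2$. Any shortest path between them is then an induced $P_3$. The converse is trivial. Hence a $P_3^\ind$-free planar graph $G$ on $n$ vertices is a disjoint union of cliques $K_{a_1}\cup\dots\cup K_{a_t}$ with $\sum a_i=n$. Planarity forces $a_i\le 4$ for every $i$, since $K_5$ is not planar. So the problem reduces to maximizing $\sum_i \binom{a_i}{2}$ over partitions of $n$ into parts of size at most $4$.

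For the optimization, I will compare edge density per vertex: $\binom{a}{2}/a=(a-1)/2$, which equals $0,\tfrac12,1,\tfrac32$ for $a=1,2,3,4$. This is strictly increasing in $a$, so one expects to use as many $4$'s as possible. I will make this precise by an exchange argument in two steps.

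First, suppose two parts $a\le b<4$ are present. Replacing them by $a-1$ and $b+1$ (dropping the part if $a-1=0$) changes the edge count by $b-(a-1)=b-a+1>0$. So an optimal partition has at most one part smaller than $4$. This forces the partition to be $\lfloor n/4\rfloor$ copies of $4$ together with one part of size $r=n-4\lfloor n/4\rfloor$ when $r>0$.

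Second, I compute the resulting value $6\lfloor n/4\rfloor+\binom{r}{2}$ for each residue:
\begin{itemize}
\item $r=0$ gives $\tfrac64 n$;
\item $r=1$ gives $\tfrac64(n-1)$;
\item $r=2$ gives $\tfrac64(n-2)+1$;
\item $r=3$ gives $\tfrac64(n-3)+3$.
\end{itemize}
These match the four cases in the statement.

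Uniqueness follows from the strictness of the exchange inequality. Any partition with two parts below $4$ is strictly improvable, so the extremal multiset of parts is unique. Hence the extremal graph is unique up to isomorphism. The construction itself is obviously planar and $P_3^\ind$-free, which gives the lower bound.

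There is no real obstacle here. The only care needed is in the initial structural lemma, specifically in choosing a shortest path so that the middle vertex is nonadjacent to neither endpoint and the endpoints are nonadjacent. I also need to handle the degenerate case $a-1=0$ in the exchange step, so that isolated vertices are treated correctly.
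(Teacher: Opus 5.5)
Your proof is correct and follows essentially the same route as the paper, which observes that every closed neighborhood $N[v]$ must be a clique (so $d(v)\le 3$ and degree-$3$ vertices form disjoint $K_4$'s); this is equivalent to your statement that components are cliques of order at most $4$. Your exchange computation $\binom{a-1}{2}+\binom{b+1}{2}-\binom{a}{2}-\binom{b}{2}=b-a+1>0$ makes rigorous the optimization and uniqueness step that the paper only asserts as ``easy to see''.
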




\begin{theorem}\label{P_4} For every $n \geq 3$,

    $\ex_\cP(n,P_4^\ind)=\left\{
\begin{array}{lll}
      3n-6 & , & n\leq 6 \\
      \lfloor\frac{8}{3}(n-2)\rfloor+1 & , & n\geq 7 \\
      \end{array} \right.$
\end{theorem}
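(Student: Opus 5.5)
The approach is to use the structure of $P_4^\ind$-free graphs, i.e.\ cographs. Every cograph on at least two vertices is either disconnected or is a join $A+B$ of two smaller cographs. Combining this with planarity (no $K_{3,3}$, no $K_5$) pins down the possible shapes, and each shape is then counted directly. Set $f(n)=\lfloor\frac{8}{3}(n-2)\rfloor+1$.

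\textbf{Lower bounds.} For $n\le 5$, the graph $K_2+P_{n-2}$ is a $P_4^\ind$-free triangulation, since $P_{n-2}$ has at most $3$ vertices. For $n=6$, the octahedron $K_{2,2,2}$ is a cograph with $12$ edges. For $n\ge 7$, write $n-2=3q+r$ with $0\le r\le 2$ and take $K_2+(qP_3\cup P_r)$. Its edge count is $1+2(n-2)+2q+(r-1)^+$, which equals $f(n)$ in each residue class. It is planar, since $K_2$ joined to a linear forest is planar, and it is a cograph, since joins and disjoint unions of cographs are cographs.

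\textbf{Upper bound for connected $G$.} Let $G=A+B$ with $|A|\le |B|$. If $|A|\ge 3$, then $G$ contains $K_{3,3}$, so $|A|\le 2$.

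If $|A|=2$, planarity of $\overline{K}_2+B$ forces $B$ to be outerplanar without a $K_{1,3}$-type obstruction. Concretely, each vertex of $B$ has at most $2$ neighbours in $B$ and $B$ is a linear forest, except in small exceptional cases such as $B=C_4$ or $B=K_3$ with $n\le 6$, which I handle by hand and which give the $3n-6$ values. A $P_4^\ind$-free linear forest has components $P_1,P_2,P_3$, so $e(B)\le \frac23|B|$. Therefore
\[
e(G)\le 1+2(n-2)+\left\lfloor\tfrac23(n-2)\right\rfloor=f(n).
\]

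If $|A|=1$, then $G=K_1+H$ with $H$ an outerplanar cograph on $n-1$ vertices. I apply the same analysis to each component $H_i$ of $H$, on $h_i$ vertices, using outerplanarity (no $K_4$, no $K_{2,3}$). Then $H_i=K_1+L$ with $L$ a $P_4^\ind$-free linear forest, or $H_i$ is one of a few small graphs ($K_1$, $K_2$, $K_3$, $C_4$, $K_4-e$). In every case I aim to show $e(H_i)\le\frac53(h_i-1)+c$ with $c\le 0$, except for the bounded exceptions. Summing over components gives $e(H)\le\frac53(n-2)$, so
\[
e(G)\le (n-1)+\tfrac53(n-2)=\tfrac83(n-2)+1,
\]
and integrality yields $f(n)$. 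The small components ($K_3$, $K_4-e$) have a slightly worse ratio, and I will check that they cannot push the total past $f(n)$. This is where the bookkeeping has to be done carefully.

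\textbf{Disconnected $G$ and the main obstacle.} If $G$ is disconnected, I bound each component by the connected case, or by $3n_i-6$ (respectively the trivial bound) for components with $n_i\le 6$. I then check superadditivity: $f(a)+f(b)\le f(a+b)$, because $f(a)+f(b)\approx\frac83(a+b-4)+2$, and similarly for mixtures with small components. The $n\le 6$ statement follows from $e\le 3n-6$ together with the constructions above.

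I expect the main obstacle to be the exhaustive, floor-accurate treatment of the small and exceptional cases. These are the components of size at most $6$, including components like $K_4$ that have a good ratio, and the exceptional $B$ when $|A|=2$. The delicate point is verifying that none of them beats $f(n)$ once $n\ge 7$.
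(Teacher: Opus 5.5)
Your proposal is correct, and it takes a genuinely different route from the paper. You use the global structure theorem for cographs: a $P_4^\ind$-free graph on at least two vertices is either disconnected or a join $A+B$. Then $K_{3,3}$ forces $|A|\le 2$, and everything reduces to counting. If $|A|=2$, then for $n\ge 7$ the graph $B$ is a linear forest with components $P_1,P_2,P_3$. If $|A|=1$, you get $K_1+H$ with $H$ an outerplanar cograph, whose components are $K_1+L$ (with $L$ such a linear forest) or $C_4$.

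The paper never invokes the cograph theorem and argues locally and inductively:
\begin{itemize}
\item it settles $n=7$ by inspecting all five $7$-vertex triangulations;
\item it reduces to $\delta(G)\ge 3$ and connected $G$ with no small components;
\item it takes a vertex $u$ of maximum degree and shows that either some non-neighbour $v$ has $N(v)=N(u)$, or $u$ is universal;
\item it finds a path component $C$ of $G[L]$ attached only to $u$ and $v$, hence incident to $\lfloor\frac{8}{3}|C|\rfloor$ edges, deletes it and inducts.
\end{itemize}

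Your approach is non-inductive and needs no enumeration at $n=7$. It also describes all planar $P_4^\ind$-free graphs, hence the extremal ones. It handles transparently graphs such as $K_1+(H_1\cup H_2)$, where $G-u$ is disconnected. There the paper's intermediate assertion $L=V(G)\setminus\{u,v\}$ in its Case 2 is not literally true, though its peeling step still works. The paper's approach, in turn, is self-contained and uses only elementary planarity arguments rather than an external structure theorem.

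The bookkeeping you flag as the main obstacle is in fact routine, and there are no exceptional components.
\begin{itemize}
\item Every connected outerplanar cograph on $h_i$ vertices has at most $\frac{5}{3}(h_i-1)$ edges. For $K_1+L$ this is $(h_i-1)+\frac{2}{3}(h_i-1)$; for $C_4$ it is $4\le 5$. The graphs $K_3=K_1+P_2$ and $K_4-e=K_1+P_3$ are already of the form $K_1+L$, with $3\le\frac{10}{3}$ and $5\le 5$.
\item For the disconnected case, every component on $n_i$ vertices has at most $\frac{8}{3}(n_i-1)$ edges; use $3n_i-6$ when $3\le n_i\le 6$. So two or more components give at most $\frac{8}{3}(n-2)<f(n)$.
\item The one step you assert without proof is that, when $|A|=2$, a cycle in $B$ forces $B$ to be exactly that cycle. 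This holds because $\overline{K}_2+C_k$ is a triangulation in which the two apices share no face, while $K_2+C_k$ contracts to $K_5$.
\end{itemize}
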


Finally, we have what we consider as the major result of the paper. 
\begin{theorem}\label{P_5} For every $n \geq 3$,

    $\ex_\cP(n,P_5^\ind)=\left\{
\begin{array}{lll}
      3n-6 & , & n\leq 8 \\
      \lfloor\frac{11n}{4}\rfloor-4 & , & n\geq 9 \\
      \end{array} \right.$
\end{theorem}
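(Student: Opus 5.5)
The plan is to prove the lower bound by an explicit construction and the upper bound by a component-wise counting argument. We reduce to connected graphs, describe the connected $P_5^\ind$-free planar graphs we need, and then optimize how the $n$ vertices are split among components.

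\textbf{Small cases.} For $n\le 8$ we need a triangulation with no induced $P_5$. The graph $K_2+P_{n-2}$ has diameter $2$ but still contains long induced paths along $P_{n-2}$ for $n\ge 7$, so it does not work directly. Instead I would use $K_2+C_{n-2}$ (the double wheel / bipyramid) for $n\le 6$. For $n=7,8$ I would exhibit specific triangulations, for example the triangulations with an $S_3$-free structure or the octahedron extended by one or two vertices, and check by hand that every induced path has at most $4$ vertices.

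\textbf{Lower bound for $n\ge 9$.} The formula $\lfloor 11n/4\rfloor-4$ suggests gluing blocks of $4$ vertices, each contributing $11$ edges, with an additive loss of $4$. My candidate construction: start from a core such as $K_2$, and attach $4$-vertex gadgets in faces so that each new gadget adds exactly $11$ edges and creates no induced $P_5$. Concretely, I would take $K_2+\overline{K}_m$-like structures. Every induced path in $K_2+H$ either uses a hub vertex, which limits it to at most $3$ vertices, or lies in $H$. So it suffices to make $H$ an outerplanar graph whose own induced paths are short and which has about $7n/4$ edges, with $K_2$ joined to all of it. Since $K_2+H$ is planar only when $H$ is a disjoint union of paths, I would instead use a single hub $v$ plus a bounded-diameter structure. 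The exact gadget will be fixed by matching the count $11$ per $4$ vertices, and I would verify the residues $n \bmod 4$ separately.

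\textbf{Upper bound.} Take an extremal $G$. Components combine additively, so it is enough to prove $e(G)\le \frac{11}{4}n-4$ for connected $G$ with $n\ge 9$, together with $e\le 3n-6$ for small components. The optimization over components then gives the floor formula; as with $P_3$, I would check that splitting into several components never beats the formula.

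For connected $G$, the diameter of $G$ is at most $3$. The plan is to use the triangular block decomposition of Ghosh, Gy\H{o}ri, Martin, Paulos and Xiao, as in Theorem \ref{theta4}: partition the edges into triangular blocks and assign each block a contribution $e(B)-\frac{11}{4}$ per face-count share, then use Euler's formula. Equivalently, we need $f\le \frac{7}{4}n-2$, i.e. the number of non-triangular faces (weighted by length) is large.

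\textbf{Key lemma.} If $G$ is $P_5^\ind$-free and planar, then large triangulated regions are impossible: a separating triangle or a $5$-vertex triangulated disc structure forces an induced $P_5$ once $n\ge 9$. I would show that every triangular block has at most a bounded number of vertices, for example that blocks are $K_4$, $K_5^-$, or the octahedron-type structures, and that blocks must be separated by faces of length at least $4$. Counting the face-length deficiency charged to each block then gives $\frac{11}{4}$.

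\textbf{Main obstacle.} The main obstacle is this structural classification of triangular blocks in $P_5^\ind$-free planar graphs, together with handling how blocks share vertices. I would first try to show that any vertex in two large blocks yields an induced $P_5$, by taking a path through that vertex using non-adjacent neighbours on each side.
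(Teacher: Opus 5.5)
\textbf{Lower bound.} You never settle on a construction, and you discard the right one because of a miscount. In $K_2+H$ with $|V(H)|=n-2$ there are $2(n-2)+1+e(H)$ edges. So $H$ needs only about $\frac34 n$ edges; your figure $\frac74 n$ is the requirement for a \emph{single} hub. A disjoint union of $P_4$'s supplies exactly $3$ edges per $4$ vertices. Indeed, $K_2+tP_4$ is planar, every induced path through a hub has at most $3$ vertices, and for $n=4t+2$ it has $11t+1=\lfloor 11n/4\rfloor-4$ edges.

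Your plan to ``check the residues separately'' also hides a real step. $K_2$ joined to any linear forest with components of order at most $4$ has at most $3(n-2)+1-\lceil (n-2)/4\rceil$ edges. This is one edge short of the formula when $n\equiv 0,3 \pmod 4$. The paper covers all residues by starting from a $P_5^{\ind}$-free $8$-vertex triangulation, which already has $18=\lfloor 11\cdot 8/4\rfloor-4$ edges. It then joins $P_4$'s and one shorter path to the two ends of one of its edges.

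For small $n$ you mean $\overline{K}_2+C_{n-2}$; the graph $K_2+C_{n-2}$ is not planar. The bipyramid also handles $n=7$, so only $n=8$ needs a special triangulation.

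\textbf{Upper bound.} A block-by-block inequality of the kind used for Theorem~\ref{theta4} cannot give this bound. Summing any inequality $\alpha f(B)\le \beta e(B)$ over blocks and applying Euler's formula yields a bound of the form $e\le c(n-2)$. But $K_2+tP_4$ has $e=\frac{11}{4}(n-2)+1$. Equivalently, for connected $G$ the claim is $\sum_B\bigl(11f(B)-7e(B)\bigr)=11f-7e\le 6$. Every connected graph attaining the bound has $11f-7e=4e-11n+22\ge 3$. So $11f(B)\le 7e(B)$ must fail for some block, and you would need a non-local argument bounding the total surplus by $6$. Your plan has no such argument.

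Your key lemma is also false as stated. For every $n\ge 9$ the paper's extremal graphs contain separating triangles, around the degree-$3$ vertices of the $8$-vertex triangulation. They also contain a triangular block that includes that entire $8$-vertex triangulation, far beyond $K_4$, $K_5^-$ or octahedron-like pieces.

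Moreover, for $9\le n\le 12$ the claimed value equals $3n-7$. So there the upper bound is exactly the assertion that no $n$-vertex triangulation is $P_5^{\ind}$-free, and nothing in your sketch proves it.

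The paper argues quite differently, in three steps:
\begin{enumerate}
\item Every triangulation on at least $9$ vertices contains an induced $P_5$. The contractions $D_4$ and $D_5$ at vertices of degree $4$ or $5$ preserve $P_5^{\ind}$-freeness, which reduces this to checking the $8$-vertex triangulations. This settles $n\le 12$.
\item Every $3$-connected planar graph on at least $13$ vertices contains an induced $P_5$.
\item Induction over $1$- and $2$-vertex cuts. Across a $2$-cut the paper adds the missing edge, then finds a vertex set $S$ incident with at most $\lfloor 11|S|/4\rfloor$ edges and deletes it.
\end{enumerate}
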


Throughout, we use the following notation.  Given a vertex $v \in G$, we denote by $N_G(v)$ the set of the neighbors of $v$ in $G$, and $d_G(v):=|N_G(v)|$. We drop the indices whenever they are clear from the context. Also, for a subset $X \subseteq V(G)$, $N_X(v):= N_G(v) \cap X$ and $d_X(v):=|N_X(v)|$. For a graph $G$, $\delta(G)$ denotes the minimum degree in $G$ and $\Delta(G)$ denotes the maximum degree in $G$. Also, $G[X]$ denotes the subgraph of $G$ induced by $X$, and $G\setminus X:=G[V(G) \setminus X]$. When $X=\{x\}$, we simply write $G \setminus x$ for $G \setminus X$.

\section{Graphs with no induced $\Theta_4$}

 In this section, we prove Theorem \ref{theta4}. By a common neighbor of an edge, we mean a common neighbor of its endpoints. If an edge has two non-adjacent common neighbors, then they form an induced $\Theta_4$. Thus, in a graph $G$ that does not contain an induced $\Theta_4$, the common neighbors of each edge must form a clique. Therefore, if an edge has three common neighbors, they form a $K_5$, and $G$ is not planar. Hence, the following Lemma follows.  

 \begin{lemma}\label{no_edge_has_3_com_nbhd}
     Let $G$ be a planar graph that does not contain an induced $\Theta_4$. Then 
     \begin{enumerate}
         \item Every edge of $G$ has at most two common neighbors.

         \item Any edge with two common neighbors belongs to one and only one $K_4$ in $G$.
     \end{enumerate}
     
 \end{lemma}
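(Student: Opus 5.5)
The plan is to derive both parts from one observation: in a graph $G$ with no induced $\Theta_4$, the common neighbors of any edge are pairwise adjacent.

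Fix an edge $uv$ of $G$, and suppose $x$ and $y$ are two common neighbors of $u$ and $v$ that are \emph{not} adjacent. Then $u,x,v,y$ span the $4$-cycle $uxvy$ together with the diagonal $uv$, and no further edge, since $xy\notin E(G)$. This is an induced $\Theta_4$, which is a contradiction. Hence the set $N(u)\cap N(v)$ of common neighbors of $uv$ is a clique.

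For part 1, suppose $uv$ has three common neighbors $x,y,z$. By the observation they are pairwise adjacent. Together with $u$ and $v$, which are adjacent to each other and to all of $x,y,z$, the five vertices $\{u,v,x,y,z\}$ span a $K_5$. This contradicts planarity by Kuratowski's theorem, so $|N(u)\cap N(v)|\le 2$.

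For part 2, suppose $uv$ has exactly two common neighbors $x,y$; by part 1 there are no others. By the observation $xy\in E(G)$, so $\{u,v,x,y\}$ induces a $K_4$ containing $uv$. For uniqueness, take any $K_4$ in $G$ that contains $uv$. Its other two vertices are adjacent to both $u$ and $v$, so they are common neighbors of $uv$. The only common neighbors are $x$ and $y$, so this $K_4$ must be exactly $\{u,v,x,y\}$.

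There is no real obstacle; the only point needing care is to confirm that the $\Theta_4$ exhibited is induced. The four vertices carry exactly the five edges $ux,xv,vy,yu,uv$, because the sole remaining pair $xy$ is a non-edge by assumption.
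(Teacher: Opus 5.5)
Your proof is correct and follows the same route as the paper: the common neighbors of an edge must form a clique (otherwise two non-adjacent ones span an induced $\Theta_4$), so three of them would give a $K_5$, contradicting planarity. You also spell out the existence and uniqueness of the $K_4$ in part 2, which the paper leaves implicit.
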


For the proof of Theorem \ref{theta4} 
we use the concept of \textit{triangular blocks} introduced in \cite{GhGyMPX}. Let us first recall the definition and some relevant properties.

\begin{defn} \cite{GhGyMPX} Let $G$ be a plane graph. An edge $e \in E(G)$ is a triangular-block if it is not in any face of length 3 (this is called a \textbf{trivial triangular block}), otherwise we inductively build up the block; start with the subgraph $H:=e$, keep adding to $H$ all faces of length 3 (and their edges) that contain an edge of $H$ until no such is left.
\end{defn}

Let $\cB$ be the set of all triangular blocks of $G$.
It is clear that every edge of $G$ is in exactly one triangular block. So,
\[e(G)=\sum_{B \in \cB} e(B). \]
 
Observe that for a block $B$, each face, except possibly the outer face, is a triangle. The edges on the boundary face of $B$ are called \textit{external} edges and others are called \textit{internal} edges. The faces of $B$ that are bounded only by edges of $G$ are called \textit{internal} faces of $B$. Each face of $G$ that has an external edge of $B$ on its boundary is called an \textit{external} face of $B$. Note that each external edge of $B$ belongs to an internal face of $B$ and an external face of $B$. Any external face of $B$ must have length at least four, otherwise it should have been added to $B$, by definition.

Let $f(G)$ denote the number of faces of $G$. For each triangular block $B$ of $G$, let $f(B)$ denote the contribution of $B$ to $f(G)$, defined as follows. If an external edge $e$ of $B$ is on the boundary of an external face of length $\ell$, then the contribution of $e$ is $c(e)=1/\ell$. Then, $f(B)=\sum_{e \text{ is external edge}} c(e)+ \#$ of internal faces of $B$. Then, $$f(G)=\sum_{B\in \cB}f(B).$$


\begin{prop}\label{tr-blocksinTheta4-free} Let $G$ be a plane graph without containing an induced $\Theta_4$. Then, each triangular block $B$ of $G$ has at most $4$ vertices, and it is one of the following graphs:
    \begin{center}
    \begin{tikzpicture}
    \node[vertex] (a) {};
\node [vertex] (b) [right=1.5cm of a] {};
\node [vertex] (c) [above right=1cm and 2cm of b] {};
\node [vertex] (d) [below left=2cm and 0.7cm of c] {};
\node [vertex] (e) [below right=2cm and 0.7cm of c] {};
\node [vertex] (f) [right=4cm of c] {};
\node[vertex] (g) [below left=1cm and 0.7cm of f] {};
\node[vertex] (h) [below right=1cm and 0.7cm of f] {};
\node[vertex] (i) [below=2cm of f] {};
\node [vertex] (j) [right=4cm of f] {};
\node [vertex] (k) [below left=2cm and 1cm of j] {};
\node [vertex] (l) [below right=2cm and 1cm of j] {};
\node [vertex] (m) [below=1.2cm of j] {};

\node[] at (1,-1.5) {$K_2$};
\node[] at (3.5,-1.5) {$K_3$};
\node[] at (7.9,-1.5) {$\Theta_4$};
\node[] at (12,-1.5) {$K_4$};

\path (a) edge (b);
\path (c) edge (d);
\path (c) edge (e);
\path (d) edge (e);
\path (f) edge (g);
\path (f) edge (h);
\path (g) edge (h);
\path (g) edge (i);
\path (i) edge (h);
\path (j) edge (k);
\path (j) edge (l);
\path (j) edge (m);
\path (k) edge (l);
\path (k) edge (m);
\path (l) edge (m);
\end{tikzpicture}
 \end{center}   
\end{prop}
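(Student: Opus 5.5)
We would prove this by growing a triangular block face by face and using Lemma \ref{no_edge_has_3_com_nbhd} to cap its size. Let $B$ be a triangular block of $G$. If $B$ is trivial it is a $K_2$, so assume $B$ contains a triangular face $T=xyz$. By the inductive definition, $B$ is obtained from $T$ by repeatedly attaching triangular faces of $G$ along edges already in the block. It therefore suffices to show that starting from $T$ we can never reach a fifth vertex, and then to identify which graphs on at most four vertices can occur.

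\textbf{Step 1: the second face.} Suppose a triangular face $xyw$ with $w\neq z$ is attached along the edge $xy$. Then $z$ and $w$ are two common neighbours of $xy$. If $zw\notin E(G)$, then $\{x,y,z,w\}$ induces a $\Theta_4$, which is impossible. So $zw\in E(G)$, and $x,y,z,w$ span a $K_4$. By part 2 of Lemma \ref{no_edge_has_3_com_nbhd}, this $K_4$ is the unique $K_4$ containing $xy$. Moreover, by part 1 of the lemma, $xy$ has no common neighbours other than $z$ and $w$.

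\textbf{Step 2: no fifth vertex.} Suppose the block has reached vertex set $\{x,y,z,w\}$, with all six edges present in $G$, and a further triangular face $uvt$ is attached along an edge $uv$ of the block. Here $t\notin\{x,y,z,w\}$. The edge $uv$ already has its two common neighbours inside $\{x,y,z,w\}$, since that set spans a $K_4$ in $G$. So $t$ would be a third common neighbour, contradicting part 1 of Lemma \ref{no_edge_has_3_com_nbhd}. Hence every attached face uses only the vertices $x,y,z,w$.

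This step is the main subtlety. When only the faces $xyz$ and $xyw$ are in $B$, the edges $zw$, $xz$, $xw$ and so on lie in $G$ but not necessarily in $B$. The common-neighbour argument works in $G$, not in $B$, so this causes no problem: every edge among the four vertices has both of its common neighbours inside the $K_4$.

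\textbf{Step 3: classification.} The block $B$ is then a union of triangular faces on at most four vertices, and which graph we get depends only on which faces are included.
\begin{itemize}
\item A single triangle, with no face attachable, gives $K_3$.
\item Two faces sharing $xy$ give $\Theta_4$ as a subgraph. The chord $zw$ exists in $G$ but belongs to $B$ only if a face containing it is added.
\item Three faces give a $\Theta_4$ plus a third triangle, say $xzw$; its edge $zw$ makes the union a $K_4$.
\item Four faces give $K_4$.
\end{itemize}
Since any two triangles on four vertices share an edge, these are all the cases. Thus $B$ is one of $K_2$, $K_3$, $\Theta_4$ or $K_4$, and in particular $|V(B)|\le 4$.
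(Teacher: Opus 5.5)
Your proof is correct and follows essentially the same route as the paper. A second triangular face forces the chord (otherwise you get an induced $\Theta_4$), so the four vertices span a $K_4$ in $G$, and a fifth vertex would give an edge of that $K_4$ a third common neighbour, contradicting Lemma~\ref{no_edge_has_3_com_nbhd}. The $\Theta_4$ versus $K_4$ outcome then depends only on whether a face containing the chord joins the block.
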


\begin{proof}
    From the definition it is easy to see that each $k$-vertex triangular block is built from a $(k-1)$-block by adding a vertex which forms a $3$-face with an edge of the $(k-1)$-block. Then, clearly on two and three vertices we have an edge and a triangle, respectively. A $4$-vertex block is formed when an edge of a triangle is in a $3$-face, and this immediately gives a $\Theta_4$. Let $a,b,c,d$ be the vertices of the $\Theta_4$ and $ab,bc,cd,da, ac$ be its edges. If $bd$ is not an edge, this will be an induced $\Theta_4$, a contradiction. Thus, $bd$ is also an edge. If none of $abd$ and $cbd$ is a face in $G$, then $bd$ is not added to the block, and the $4$-vertex block remains to be the $\Theta_4$. If any of $abd$ or $cbd$ is a face of $G$, then we add $bd$ and the block will be a $K_4$. This shows that any block on at most $4$-vertices is one of $K_2, K_3, \Theta_4$ and $K_4$.
    
    Suppose that $B$ is a triangular block with $5$ vertices. Then, $B$ is built from either a $\Theta_4$ or a $K_4$. It means that an edge of the $\Theta_4$ or the $K_4$ is in a $3$-face with the fifth vertex of the block. Note that as explained in the previous paragraph, the $\Theta_4$ block also sits in a $K_4$. However, if an edge of a $K_4$ lies in a triangle (face or not) with a vertex not in the $K_4$, then that edge will have three common neighbors, contradicting Lemma \ref{no_edge_has_3_com_nbhd}. Therefore, there is no triangular block on $5$ vertices.
\end{proof}


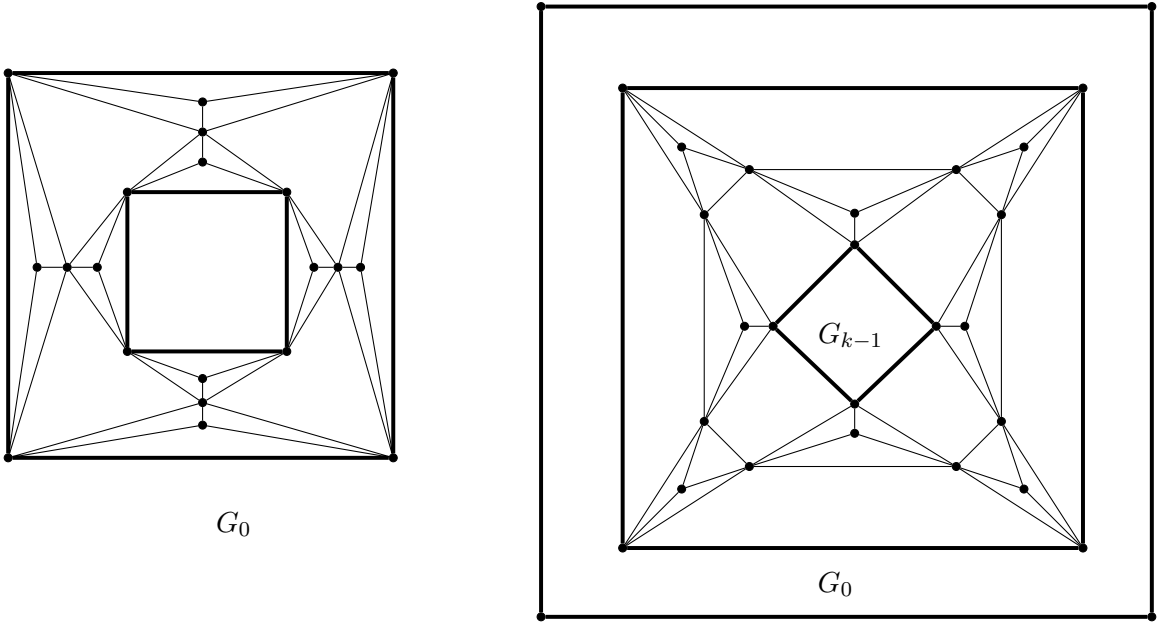
\begin{figure}[h]
\begin{center}
    \begin{tikzpicture}
    \node[vertex] (a) {};
\node[vertex] (b) [right=5cm of a] {};
\node[vertex] (c) [below=5cm of b] {}; 
\node[vertex] (d) [below=5cm of a] {}; 

\node[vertex] (x) [below right=1.5cm and 1.5cm of a] {};
\node[vertex] (y) [right=2cm of x] {}; 
\node[vertex] (z) [below=2cm of y] {}; 
\node[vertex] (w) [below=2cm of x] {};

\node[vertex] (a1) [below right=0.3cm and 2.5cm of a] {};
\node[vertex] (a2) [below right=0.7cm and 2.5cm of a] {};
\node[vertex] (a3) [below right=1.1cm and 2.5cm of a] {};

\node[vertex] (b1) [below right=2.5cm and 3.98cm of a] {};
\node[vertex] (b2) [below right=2.5cm and 4.3cm of a] {};
\node[vertex] (b3) [below right=2.5cm and 4.6cm of a] {};

\node[vertex] (c1) [below right=3.98cm and 2.5cm of a] {};
\node[vertex] (c2) [below right=4.3cm and 2.5cm of a] {};
\node[vertex] (c3) [below right=4.6cm and 2.5cm of a] {};

\node[vertex] (d1) [below right=2.5cm and 0.3cm of a] {};
\node[vertex] (d2) [below right=2.5cm and 0.7cm of a] {};
\node[vertex] (d3) [below right=2.5cm and 1.1cm of a] {};

\node[] at (3,-6) {$G_0$};

\draw[line width=1.5pt] (a) -- (b) -- (c) -- (d) -- (a);
\draw[line width=1.5pt] (x) -- (y) -- (z) -- (w) -- (x);

\draw (a) -- (a1) -- (b) -- (a2) -- (a);
\draw (x) -- (a3) -- (y) -- (a2) -- (x);
\draw (a1) -- (a2) -- (a3);

\draw (y) -- (b1) -- (z) -- (b2) -- (y);
\draw (b) -- (b3) -- (c) -- (b2) -- (b);
\draw (b1) -- (b2) -- (b3);

\draw (z) -- (c1) -- (w) -- (c2) -- (z);
\draw (c) -- (c3) -- (d) -- (c2) -- (c);
\draw (c1) -- (c2) -- (c3);

\draw (a) -- (d1) -- (d) -- (d2) -- (a);
\draw (x) -- (d3) -- (w) -- (d2) -- (x);
\draw (d1) -- (d2) -- (d3);

\node[vertex] (A) [above right=0.8cm and 7cm of a] {};
\node[vertex] (B) [right=8cm of A] {};
\node[vertex] (C) [below=8cm of B] {}; 
\node[vertex] (D) [below=8cm of A] {}; 

\node[vertex] (X) [below right=1cm and 1cm of A] {};
\node[vertex] (Y) [right=6cm of X] {}; 
\node[vertex] (Z) [below=6cm of Y] {}; 
\node[vertex] (W) [below=6cm of X] {};

\draw[line width=1.5pt] (A) -- (B) -- (C) -- (D) -- (A);
\draw[line width=1.5pt] (X) -- (Y) -- (Z) -- (W) -- (X);
\node[] at (11,-6.8) {$G_0$};

\node[vertex] (xx) [below right=2cm and 3cm of X] {};
\node[vertex] (yy) [below right=1cm and 1cm of xx] {}; 
\node[vertex] (zz) [below=2cm of xx] {}; 
\node[vertex] (ww) [below left =1cm  and 1cm of xx] {};

\draw[line width=1.5pt] (xx) -- (yy) -- (zz) -- (ww) -- (xx);
\node[] at (11.2,-3.5) {$G_{k-1}$};

\node[vertex] (aa1) [below right=1cm and 1.6cm of X] {};
\node[vertex] (aa2) [below right=1.6cm and 1cm of X] {};

\node[vertex] (aa3) [below right=0.7cm and 0.7cm of X] {};

\node[vertex] (bb1) [below left=1cm and 1.6cm of Y] {};
\node[vertex] (bb2) [below left=1.6cm and 1cm of Y] {};

\node[vertex] (bb3) [below left=0.7cm and 0.7cm of Y] {};

\node[vertex] (cc1) [above left=1cm and 1.6cm of Z] {};
\node[vertex] (cc2) [above left=1.6cm and 1cm of Z] {};

\node[vertex] (cc3) [above left=0.7cm and 0.7cm of Z] {};

\node[vertex] (dd1) [above right=1cm and 1.6cm of W] {};
\node[vertex] (dd2) [above right=1.6cm and 1cm of W] {};

\node[vertex] (dd3) [above right=0.7cm and 0.7cm of W] {};

\node[vertex] (xx1) [above=0.3cm of xx] {};
\node[vertex] (yy1) [right=0.26cm of yy] {};
\node[vertex] (zz1) [below=0.27cm of zz] {};
\node[vertex] (ww1) [left=0.26cm of ww] {};

\draw (X) -- (aa3);
\draw (X) -- (aa1) -- (aa3) -- (aa2) -- (X);
\draw (Y) -- (bb3);
\draw (Y) -- (bb1) -- (bb3) -- (bb2) -- (Y);
\draw (Z) -- (cc3);
\draw (Z) -- (cc1) -- (cc3) -- (cc2) -- (Z);
\draw (W) -- (dd3);
\draw (W) -- (dd1) -- (dd3) -- (dd2) -- (W);

\draw (aa1) -- (bb1) -- (bb2) -- (cc2) -- (cc1) -- (dd1) -- (dd2) -- (aa2) -- (aa1);

\draw (xx) -- (xx1);
\draw (xx) -- (aa1) -- (xx1) -- (bb1) -- (xx);
\draw (yy) -- (yy1);
\draw (yy) -- (bb2) -- (yy1) -- (cc2) -- (yy);
\draw (zz) -- (zz1);
\draw (zz) -- (cc1) -- (zz1) -- (dd1) -- (zz);
\draw (ww) -- (ww1);
\draw (ww) -- (aa2) -- (ww1) -- (dd2) -- (ww);

\end{tikzpicture}
 \end{center}   
    \caption{Illustration of constructing $G_k$}
    \label{figrTheta4}
\end{figure}

\begin{proof}[\textbf{Proof of Theorem \ref{theta4}}]
Let $G$ be a planar graph on $n$ vertices that does not contain an induced $\Theta_4$. To prove the upper bound, it is enough to show that for each triangular block $B$, we have
\begin{equation}\label{8f5e}
    8f(B)-5e(B)\leq 0
\end{equation}
Since it follows that $$8f(G)-5e(G)=8\sum_{B \in \cB}f(B)-5\sum_{B\in \cB}e(B)=\sum_{B \in \cB}(8f(B)-5e(B))\leq 0$$
Then, together with Euler's formula, $n+f=e+2$, we obtain that $e \leq \frac{8}{3}(n-2)$.

Let $B$ be a triangular block of $G$. By the Proposition \ref{tr-blocksinTheta4-free}, $B$ is one of the graphs  $K_2, K_3, \Theta_4$ and $K_4$.

\smallskip
If $B$ is the trivial block $K_2$, then $e(B)=1$, and the faces containing the edge is at least a $4$-face, otherwise it would not be a trivial block. Thus, $f(B)\leq 2 \cdot \frac{1}{4}=1/2$. Hence, \ref{8f5e} clearly holds.

\smallskip
If $B=K_3$, then $e(B)=3$, and each of its edges is an external edge. If each edge belongs to a different external face, then $f(B)\leq 3 \cdot \frac{1}{4}+1\leq \frac{5}{4}$. Thus, $8f(B)-5e(B)=-5<0$. If two edges of $B$ are in one external face, then it must have length at least $5$, otherwise we would have an induced $\Theta_4$. Then $f(B)\leq \frac{1}{5}+\frac{1}{5}+\frac{1}{4}+1$. Again, $8f(B)-5e(B)<0$.

\smallskip
If $B=\Theta_4$, then $e(B)=5$. Let $V(B)=\{a,b,c,d\}$ with edges $ab,bc,cd,da$ and $ac$. Recall that $bd$ is an edge of $G$. (Note that in this case the edge $bd$ must be a trivial block).

Now, $B$ has four exterior edges, if each of them is in a different face then $f(B)\leq 4\cdot\frac{1}{4}+2=3$. Then, $8f(B)-5e(B)<0$.

The two edges $ab$ and $bc$ together cannot be on the boundary of an external face, because of the edge $bd$. For the same reason, $cd$ and $da$ together cannot be on the boundary of an external face. Thus, if the two edges $ab$ and $da$ are on the boundary of one external face, then the length of such a face must be at least $5$. Since a face of length four means a common neighbor $x$ for $b$ and $d$. Note that $x \neq c$, since the face cannot be $abcd$, because of the edge $bd$. But then the edge $bd$ will have three common neighbors $a, c$ and $x$, which contradicts Lemma \ref{no_edge_has_3_com_nbhd}. Then, $f(B)\leq 2\cdot \frac{1}{5}+\frac{1}{4}+\frac{1}{4}+2$, and then $8f(B)-5e(B)<0$.

\smallskip
Finally, if $B=K_4$, then $e(B)=6$. If each boundary edge is in a different external face, then $f(B)=3 \cdot \frac{1}{4}+4+3$, and $8f(B)-5e(B)=0$. assume that two of the boundary faces of $B$ are in one external face. Let the vertices on the boundary face of $B$ be $a, b$ and $c$, and $ab$ and $bc$ are in one external face. If the external face face has length $4$, then there is a common neighbor $x$ for the edge $ac$ different from two other vertices of $B$, that is, $ac$ has three common neighbors, a contradiction. Thus the external face has length at least $5$. Thus, $f(B)=2 \cdot \frac{1}{5}+\frac{1}{4}+3$, and $8f(B)-5e(B)<0$. This completes the proof of the upper bound. 

\smallskip
Now, for each $n=36k+20$, for non-negative integers $k$, we recursively construct an $n$-vertex planar graph $G$ 
such that every block is a $K_4$ and each external edge of the block is contained in a distinct external face of length four. Then, $G$ does not contain an induced $\Theta_4$ and along the above proof we have $8f(G)-5e(G)=0$, which implies that $e(G)=\frac{8}{3}(n-2)$. Let $G_0$ be the given graph on the left side of Figure \ref{figrTheta4}. For $k\geq 1$, assume that we have constructed $G_{k-1}$, then we construct $G_k$ as illustrated on the right side of Figure \ref{figrTheta4}: Put $G_{k-1}$ inside the inner 4-face of $G_0$, and between the inner face of $G_0$ and the outer face of $G_{k-1}$ add $16$ vertices and add edges as drawn in the figure. 
\end{proof}

\section{Graphs with no induced short paths}

In this section we prove Theorem \ref{P3} and Theorem \ref{P_4}.
\begin{proof}[\textbf{Proof of Theorem \ref{P3}}]
    Let $G$ be an $n$-vertex planar graph with no induced $P_3$. Every vertex of degree at least two can be the middle (center) vertex of an induced $P_3$ unless every pair of its neighbors are adjacent. Thus, every vertex $v$ lies in a clique of order $d(v)+1$. Since there is no $K_5$ in a planar graph, then $d(v) \leq 3$, for every vertex of $G$. The above argument further implies that the vertices of degree three in $G$ form disjoint $K_4$'s. Clearly, if $n$ is not divisible by $4$, the fewer the number of vertices of degree less than three the more edges we would have in the graph. It is easy to see that the maximum number of edges is obtained by $\lfloor\frac{n}{4}\rfloor$ disjoint copies of $K_4$ and a smaller clique on the remaining vertices. This completes the proof.
\end{proof}

\begin{proof}[\textbf{Proof of Theorem \ref{P_4}}]
    For each $n\leq 5$, there is a unique triangulation, and it does not contain an induced $P_4$. For $n=6$, the triangulation $\overline{K}_2+C_4$ does not contain an induced $P_4$. This proves the first part. 
    Now, we assume that $n\geq 7$. For the lower bound, consider the following construction. Write $n-2=3a+b$, for some integers $a\geq 1$ and $0\leq b<3$. 
    Let $H_{n,4}:=(aP_3 \cup P_b)+K_2$. It is easy to see that $H_{n,4}$ is a $P_4^\ind$-free planar graph with $|E(H_{n,4})|= \lfloor \frac{8}{3}(n-2)\rfloor +1$.

    
    To prove the upper bound, let $G$ be a planar graph of $n \geq 7$ vertices that does not contain an induced $P_4$. 

    If $n=7$, there are five (up to isomorphism) triangulations, and one can check that each of them contains an induced $P_4$, see Figure \ref{fig:7trhasP4}. Thus, for $n=7$, there are at most $3n-7=14=\lfloor \frac{8}{3}(n-2)\rfloor+1$ edges, and we are done.

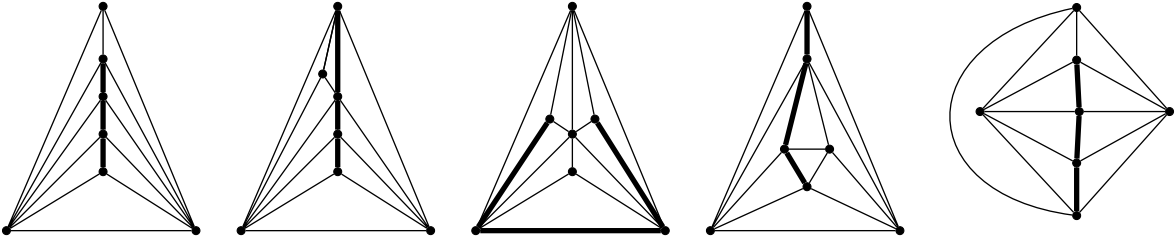
\begin{figure}[h]
        \centering
\begin{center}
    \begin{tikzpicture}
    \node[vertex] (a1) {};
    \node[vertex] (b1) [right=2.4cm of a1] {};
\node [vertex] (c1) [above right=0.7cm and 1.2cm of a1] {};
\node [vertex] (d1) [above right=1.2cm and 1.2cm of a1] {};
\node [vertex] (e1) [above right=1.7cm and 1.2cm of a1] {};
\node [vertex] (f1) [above right=2.2cm and 1.2cm of a1] {};

\node [vertex] (g1) [above right=2.9cm and 1.2cm of a1] {};

\draw[line width=0.5pt] (a1) -- (b1) -- (c1)-- (a1)-- (d1) -- (b1) -- (e1) -- (a1) -- (f1) -- (b1) -- (g1) -- (a1);
\draw[line width=0.5pt] (c1) -- (d1)-- (e1) -- (f1) -- (g1);
\draw[line width=2pt] (c1) -- (d1)-- (e1) -- (f1);



\node[vertex] (a2) [right=3cm of a1] {};
    \node[vertex] (b2) [right=2.4cm of a2] {};
\node [vertex] (c2) [above right=0.7cm and 1.2cm of a2] {};
\node [vertex] (d2) [above right=1.2cm and 1.2cm of a2] {};
\node [vertex] (e2) [above right=1.7cm and 1.2cm of a2] {};
\node [vertex] (f2) [above right=2cm and 1cm of a2] {};

\node [vertex] (g2) [above right=2.9cm and 1.2cm of a2] {};

\draw[line width=0.5pt] (a2) -- (b2) -- (c2)-- (a2)-- (d2) -- (b2) -- (e2) -- (a2) -- (f2) -- (g2) -- (b2);
\draw[line width=0.5pt] (a2) -- (g2) -- (e2);
\draw[line width=0.5pt] (c2) -- (d2)-- (e2) -- (f2) -- (g2);
\draw[line width=2pt] (c2) -- (d2)-- (e2) -- (g2);


\node[vertex] (a3) [right=3cm of a2] {};
    \node[vertex] (b3) [right=2.4cm of a3] {};
\node [vertex] (c3) [above right=0.7cm and 1.2cm of a3] {};
\node [vertex] (d3) [above right=1.2cm and 1.2cm of a3] {};
\node [vertex] (e3) [above right=1.4cm and 1.5cm of a3] {};
\node [vertex] (f3) [above right=1.4cm and 0.9cm of a3] {};

\node [vertex] (g3) [above right=2.9cm and 1.2cm of a3] {};

\draw[line width=0.5pt] (a3) -- (b3) -- (c3)-- (a3)-- (d3) -- (b3) -- (e3) -- (d3) -- (f3) -- (a3) -- (g3) -- (b3);
\draw[line width=0.5pt] (f3) -- (g3) -- (e3);
\draw[line width=0.5pt] (c3) -- (d3)-- (g3);
\draw[line width=2pt] (f3) -- (a3)-- (b3) -- (e3);


\node[vertex, right=3cm of a3] (a4) {};
    \node[vertex] (b4) [right=2.4cm of a4] {};
\node [vertex] (c4) [above right=0.5cm and 1.2cm of a4] {};
\node [vertex] (d4) [above right=1cm and 0.9cm of a4] {};
\node [vertex] (e4) [above right=1cm and 1.5cm of a4] {};
\node [vertex] (f4) [above right=2.2cm and 1.2cm of a4] {};

\node [vertex] (g4) [above right=2.9cm and 1.2cm of a4] {};

\draw[line width=0.5pt] (a4) -- (b4) -- (f4)-- (a4)-- (g4) -- (b4);
\draw[line width=0.5pt] (g4) -- (f4)-- (d4) -- (c4) -- (e4) -- (d4) -- (a4) -- (c4) -- (b4) -- (e4) -- (f4);
\draw[line width=2pt] (c4) -- (d4)-- (f4) -- (g4);


\node[vertex, above right=1.5 and 3.5cm  of a4] (a5) {};
    \node[vertex] (b5) [right=2.4cm of a5] {};
    \node [vertex] (c5) [below right=1.3cm and 1.2cm of a5] {};
\node [vertex] (d5) [below right=0.6cm and 1.2cm of a5] {};
\node [vertex] (e5) [right=1.2cm of a5] {};
\node [vertex] (f5) [above right=0.6cm and 1.2cm of a5] {};

\node [vertex] (g5) [above right=1.3cm and 1.2cm of a5] {};

\draw[line width=0.5pt] (a5) -- (c5) -- (b5)-- (d5)-- (a5) -- (e5) -- (b5) -- (f5) -- (a5) -- (g5) -- (b5);
\draw[line width=0.5pt] (g5) -- (f5)-- (e5) -- (d5) -- (c5);
\draw[line width=0.5pt] (g5) .. controls (12,2.5) and (12,0.5) .. (c5);
\draw[line width=2pt] (c5) -- (d5)-- (e5) -- (f5);

\end{tikzpicture}
 \end{center}   
        \caption{All $7$-vertex triangulations contain induced $P_4$.}
        \label{fig:7trhasP4}
    \end{figure}

    For the rest of the proof, assume that $n\geq 8$.
    We may assume that $\delta(G) \geq 3$, otherwise we are done by induction. In particular, it has no component of order at most $3$. If $G$ has two components of order at most $6$, say $4\leq k_1,k_2 \leq 6$, then they give at most $(3k_1-6)+(3k_2-6)=3(k_1+k_2)-12$ edges to $G$. Replace them by a copy of $H_{k_1+k_2,4}$ which gives $\lfloor\frac{8}{3}(k_1+k_2-2)\rfloor+1$. Since $k_1+k_2\leq 12$, the resulted graph is still $P_4^\ind$-free and has strictly more edges than $G$, and hence, we may assume that $G$ has at most one component of order at most $6$. Let $C$ be such a component, that is, $4 \leq |V(C)|=k\leq 6$, and $G\setminus C$ has no component of order at most $6$ (in particular, $n-k\geq 7$). Then, by induction hypothesis, we have $e(G)\leq 3k-6+ \lfloor\frac{8}{3}(n-k-2)\rfloor+1 < \lfloor\frac{8}{3}(n-2)\rfloor+1$, since $k\leq 6$, and we are done. Thus, we may assume that each component of $G$ has order at least $7$. Then, if $G$ is not connected, we may apply induction on its components (note that the extremal value has a negative additive constant). Hence, we may assume that $G$ is connected.

    Let $u$ be a vertex of maximum degree in $G$. Note that $\Delta(G)\geq 5$, since otherwise $e(G) \leq 2n < \lfloor\frac{8}{3}(n-2)\rfloor+1$, for every $n\geq 8$, a contradiction.
    
    Let $W:= V(G) \setminus (\{u\} \cup N(u))$. We distinguish two cases.

    \vskip3mm
    
    \textbf{Case 1.} $W \neq \emptyset$.
    
    If there are no edges between $W$ and $N(u)$, then $G$ is not connected, a contradiction. Let $v \in W$ be adjacent to some vertex in $N(u)$. Then, $v$ is not adjacent to $u$ but has common neighbors with $u$. Let $w \in N(v) \cap N(u)=:L$. If there is a vertex $u' \in N(u) \setminus N(v)$ such that $u'w \notin E(G)$, then $u'uwv$ is an induced $P_4$, a contradiction. Thus, every vertex in $N(u) \setminus N(v)$ must be adjacent to $w$. Similarly, every vertex in $N(v)\setminus N(u)$ must be adjacent to $w$. If $d_{N(u)}(v)\leq 2$, then there is a vertex $u' \in N(u)\setminus N(v)$, and a vertex $v' \in N(v) \setminus N(u)$, since $\delta(G) \geq 3$. Then $\{u,v\} \cup (N(u) \triangle N(v)) \subseteq N(w)$, which means $d(w) > d(u)$, contradicting the choice of $u$. Thus, $d_{N(u)}(v)\geq 3$ (i.e. $|L|\geq 3$). If there is a vertex $x \in N(u) \setminus N(v)$, then it must be adjacent to every vertex in $L$, since otherwise we obtain an induced $P_4$ as above. Then, $\{x,u,v\}$ together with three vertices in $L$ form a copy of $K_{3,3}$ in $G$, contradicting the planarity of $G$. Thus, $N(u) \subseteq N(v)$, and similarly $N(v) \subseteq N(u)$, that is, $N(u)=N(v)=L$. Recall that $\Delta(G)\geq 5$, and hence,  $l:=|L| \geq 5$.
   
   Let $L=\{x_1, x_2, \ldots, x_l\}$. Then in a plane drawing of $G$, the subgraph $G':=G[\{u,v\} \cup L]$ divides the plane into regions bounded by $ux_ivx_j$, for consecutive (when ordered in the obvious way) vertices of $L$. Any vertex $x_i\in L$ can only be adjacent to $x_{i-1}$ and $x_{i+1}$ in $L$, where $+$ is modulo $l$. 
   If a vertex $x_i \in L$ that has a neighbor $y\in V(G) \setminus (\{u,v\} \cup L)$, then $y$ must lie in a region with $x_i$ is on its boundary, that is, $y$ can only be adjacent to $x_i$ and $x_{i+1}$, or $x_i$ and $x_{i-1}$ in $L$. Since $|L|\geq 5$, then there is a vertex $x_j \in L$ which is neither adjacent to $x_i$ nor adjacent  to $y$. Hence, $x_jux_iy$ is an induced $P_4$ in $G$, a contradiction. Since $G$ is connected, we obtain that $L=V(G) \setminus \{u,v\}$.


   Since only consecutive vertices of $L$ can be adjacent to each other, $G[L]$ contains no cycle of length $l'<l$, then any $P_4$ in $G[L]$ would be an induced $P_4$ (which means $G[L]$ does not contain $C_l$ either, since $l\geq 5$). Thus, $G[L]$ does not contain any $P_4$, i.e. every component of $G[L]$ is a path on at most three vertices. Observe that a component of order one in $G[L]$ means a vertex of degree two in $G$, which is a contradiction. If $C$ is a component of $G[L]$, then if it has three vertices, it is incident to $8$ edges, and if it has two vertices, it is incident to $5$ edges, which means that in any case $C$ is incident to $\lfloor \frac{8}{3}|C|\rfloor$ edges.
   We can now delete $C$ and apply induction to obtain $$|E(G)|=|E(G\setminus C)|+ \lfloor \frac{8}{3}|C|\rfloor \leq \lfloor \frac{8}{3}((n-|V(C)|)-2) \rfloor+1+\lfloor \frac{8}{3}|C|\rfloor=\lfloor\frac{8}{3}(n-2) \rfloor+1,$$
   where the inequality is by the induction hypothesis for every $n\geq 9$. Note that if $n=8$, then $n-|V(C)|<7$, but still the inequality is true for $G\setminus C$. Since if $|V(C)|=3$ then $n-|V(C)|=n-3=5$, and $G\setminus C$ has at most $9=\lfloor\frac{8}{3}((n-3)-2)\rfloor+1$ edges. Also, if $|V(C)|=2$, then $n-|V(C)|=n-2=6$, but in this case $G\setminus C$ cannot be a triangulation on $6$ vertices. Since on $6$ vertices, the only triangulation without induced $P_4$ is $\overline{K}_2+C_4$, which cannot have any other vertex joined to it without containing an induced $P_4$. Thus, $G\setminus C$ would have at most $11=\lfloor\frac{8}{3}((n-2)-2)\rfloor+1$ edges. 
   
   


   \vskip3mm
   \textbf{Case 2.} $W = \emptyset$.

   In this case, $d(u)=n-1$. Choose a vertex $v\in V(G)\setminus \{u\}$ with maximum degree in $G':=G\setminus u$.
   If $d_{G'}(v)\leq 2$, then $e(G)\leq 2n-2< \lfloor\frac{8}{3}(n-2)\rfloor+1$, for every $n\geq 8$, a contradiction. Then, $d_{G'}(v) \geq 3$, that is, $v$ is adjacent to $u$ and has at least three common neighbors with $u$. Let $L:=N(u) \cap N(v)$, then $|L|\geq 3$. 
   

   Let $L:=\{x_1,x_2, \ldots, x_l\}$. Order the vertices in $L$ so that in a plane drawing of $G$, the subgraph $G[\{u,v\} \cup L]$ of $G$ separates the plane into regions that are bounded by $uvx_1$, $ux_1vx_2, \ldots,$ $ ux_ivx_{i+1}, \ldots, uvx_l$. 
   Then, only consecutive vertices in $L$ can be adjacent to each other, which means $G[L]$ does not contain any cycle. Thus, any $P_4$ in $G[L]$ would be an induced $P_4$, and hence, $G[L]$ does not contain a $P_4$. Therefore, the components of $G[L]$ are paths of at most three vertices. 
   
   Similarly to the previous case, we show that $L=V(G) \setminus \{u,v\}$.
   Let $y \in V(G)\setminus(\{v\} \cup N(v))$ be a vertex that is adjacent to a vertex $x_i\in L$.

   If $l\geq 4$, then none of $x_i$ and $y$ can be adjacent to $x_{i+2}$, and hence $yx_ivx_{i+2}$ is an induced $P_4$, a contradiction.

   Assume that $l=3$, i.e. $d_{G'}(v)=3$. Still, the above argument is true if $i=1$ or $3$. So assume that $y$ is adjacent to $x_2$. Then it is in the region bounded by $ux_1vx_2$ or in the region bounded by $ux_2vx_3$. without loss of generality, suppose that $y$ is in the region bounded by $ux_1vx_2$. If $y$ is also adjacent to $x_1$, then we are done as before, since $yx_1vx_3$ would be an induced $P_4$. So assume that $y$ is not adjacent to $x_1$. If $x_2$ is not adjacent to $x_1$, then $yx_2vx_1$ is an induced $P_4$. Similarly, if $x_2$ is not adjacent to $x_3$, then $yx_2vx_3$ is an induced $P_4$. Therefore, $x_2$ is adjacent to all of $y,v,x_1$ and $x_3$, which means that $d_{G'}(x_2)>d_{G'}(v)$, contradicting the choice of $v$.

   Now, similarly to the previous case, we can find a component $C$ in $G[L]$ which is incident to $\lfloor \frac{8}{3} |C|\rfloor$ edges and conclude the result by induction. 
   \end{proof}

\section{Graphs with no induced  $P_5$}

Here, we prove Theorem \ref{P_5}. First, we need some preliminary results. Let $G$ be a planar graph, define operations $D_4$ and $D_5$ on $G$ as follows.  If $u$ is a vertex of degree $4$ in $G$ such that $N(u)$ forms a cycle of length four, then $D_4$ is the operation of deleting $u$ and adding a diagonal edge for the $4$-face bounded by $N(u)$ after deleting $u$. If $u$ is a vertex of degree $5$ such that $N(u)$ forms a cycle of length five, then $D_5$ is the operation of deleting $u$ and adding the two possible diagonals for the $5$-face bounded by $N(u)$ after deleting $u$, see Figure \ref{D4D5}. Observe that these operations are simply the contraction of one of the edges incident to $u$.

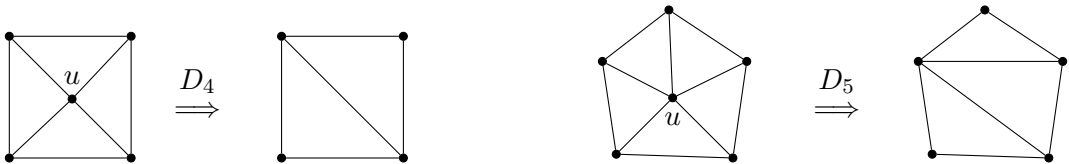
\begin{figure}[h]
        \centering
\begin{center}
    \begin{tikzpicture}
    \node[vertex] (a1) {};
    \node[vertex] (b1) [right=1.5cm of a1] {};
    \node[vertex] (c1) [below=1.5cm of b1] {};
    \node[vertex] (d1) [left=1.5cm of c1] {};
    \node[vertex, label=above:$u$] (u1) [below right=0.75cm and 0.75cm of a1] {};

    \draw (a1) -- (b1) -- (c1) -- (d1) -- (a1) -- (u1) -- (c1);
    \draw (b1) -- (u1) -- (d1);

   \node[] at (2.5,-1) {$\Longrightarrow$};
   \node[] at (2.5,-0.6) {$D_4$};

    \node[vertex] (a12) [right=3.5cm of a1]{};
    \node[vertex] (b12) [right=1.5cm of a12] {};
    \node[vertex] (c12) [below=1.5cm of b12] {};
    \node[vertex] (d12) [left=1.5cm of c12] {};

    \draw (a12) -- (b12) -- (c12) -- (d12) -- (a12) -- (c12);

    \node[vertex] (a21) [below right=0.25 and 7.8cm of a1]{};
    \node[vertex] (b21) [above right=0.6cm and 0.8cm of a21]{};
    \node[vertex] (c21) [right=1.8cm of a21]{};
    \node[vertex] (d21) [below left=1.2cm and 0.1cm of c21]{};
    \node[vertex] (e21) [below right=1.15cm and 0.1cm of a21]{};
    \node[vertex, label=below:$u$] (u21) [below right= 0.4cm and 0.85cm of a21] {};

    \draw (a21) -- (b21) -- (c21) -- (d21) --(e21) -- (a21) -- (u21) -- (c21);
    \draw (b21) -- (u21) -- (d21);
    \draw (e21) -- (u21);

    \node[] at (11,-1) {$\Longrightarrow$};
   \node[] at (11,-0.6) {$D_5$};

   \node[vertex] (a22) [below right=0.25cm and 12cm of a1]{};
    \node[vertex] (b22) [above right=0.6cm and 0.8cm of a22]{};
    \node[vertex] (c22) [right=1.8cm of a22]{};
    \node[vertex] (d22) [below left=1.2cm and 0.1cm of c22]{};
    \node[vertex] (e22) [below right=1.15cm and 0.1cm of a22]{};

    \draw (a22) -- (b22) -- (c22) -- (d22) --(e22) -- (a22);
    \draw (c22) -- (a22) -- (d22);

\end{tikzpicture}
 \end{center}   
        \caption{Illustrating the operations $D_4$ and $D_5$}
        \label{D4D5}
    \end{figure}

\begin{lemma}\label{DkeepsP5ind-freeness}
    Let $G'$ be a planar graph obtained from $G$ by applying the operation $D_4$ or $D_5$. If $G$ is $P_5^\ind$-free, then so is $G'$.
\end{lemma}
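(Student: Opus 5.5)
Write $C=N_G(u)$ for the $4$- or $5$-cycle around $u$. The plan is to view $D_4$ and $D_5$ as the contraction of the edge $ux_1$ into $x_1$, where $x_1\in C$ is the endpoint of all the added diagonals. Concretely, $G'=(G\setminus u)+\{x_1x_j : x_j\in C,\ x_1x_j\notin E(G)\}$. Hence in $G'$ the vertex $x_1$ is adjacent to every other vertex of $C$. Apart from the new edges, $G'$ has exactly the adjacencies of $G\setminus u$. I will show that an induced $P_5$ in $G'$, call it $P$, yields an induced $P_5$ in $G$.

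First I dispose of two easy cases. If $x_1\notin V(P)$, or if no new edge has both endpoints in $V(P)$, then $G'[V(P)]=G[V(P)]$, so $P$ is already induced in $G$. So assume $x_1\in V(P)$ and $P$ uses a new edge $x_1x_j$. The key observation is the following. Since $P$ is induced in $G'$ and $x_1$ is adjacent in $G'$ to all of $C\setminus\{x_1\}$, the only vertices of $C$ that can lie on $P$ are $x_1$ and its (at most two) neighbours along $P$. Every other vertex of $P$ lies outside $C\cup\{u\}$, and is therefore non-adjacent to $u$ in $G$.

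\emph{Case A: every $P$-neighbour of $x_1$ lies in $C$.} Replace $x_1$ by $u$. In $G$, $u$ is adjacent to all of $C$, so $u$ is adjacent to the $P$-neighbours of $x_1$. By the observation, $u$ is adjacent to no other vertex of $P$. All other adjacencies among the vertices of $P$ do not involve $x_1$, so they are the same in $G$ and $G'$. Thus we get an induced $P_5$ in $G$. This case also covers $x_1$ being an endpoint of $P$, and the $D_5$ situation where $P$ uses both diagonals $x_1x_3,x_1x_4$.

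\emph{Case B: $x_1$ has a $P$-neighbour $x_j\in C$ via a new edge and a $P$-neighbour $y\notin C$.} Then $P=\dots y\,x_1\,x_j\dots$. I insert $u$ to get the $6$-vertex sequence $\dots y\,x_1\,u\,x_j\dots$ in $G$. It is an induced path for three reasons:
\begin{itemize}
\item $x_1x_j\notin E(G)$;
\item $u$ is adjacent only to $x_1,x_j$ among these vertices, by the observation;
\item all remaining pairs have the same adjacency in $G$ as in $G'$.
\end{itemize}
Any $5$ consecutive vertices of this path then give an induced $P_5$ in $G$, a contradiction.

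The only delicate point is the bookkeeping in the key observation: that $P$ meets $C$ only in $x_1$ and its $P$-neighbours, so that $u$ has no unwanted adjacencies. I expect this to need a careful but routine check. Both cases together prove the lemma.
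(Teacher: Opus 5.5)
Your proof is correct and is essentially the paper's argument: an induced $P_5$ in $G'$ must use an added diagonal at $x_1$, and you either subdivide that diagonal by $u$ to get an induced $P_6$ in $G$, or swap $x_1$ for $u$ (the paper's replacement of $vxz$ by $vuz$); the only difference is that you treat $D_4$ and $D_5$ uniformly through the contraction viewpoint rather than separately. One small remark: the configuration you mention in Case A, where $P$ uses both diagonals $x_1x_3$ and $x_1x_4$, cannot occur, because $x_3x_4$ is an edge of the cycle and $P$ is induced.
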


\begin{proof}
    Assume that $G'$ is obtained by applying $D_4$, and contains an induced $P_5$. Let $u$ be the vertex of degree $4$, and $N(u):=\{x,y,z,w\}$. Let the four cycle formed by $N(u)$ be $xyzw$ and $xz$ be the added edge in $G'$. Let $P$ be an induced copy of $P_5$ in $G'$. Since $G$ is $P_5^\ind$-free, then $P$ must contain $xz$. Thus, $P$ cannot contain any of $y$ and $w$, since otherwise it would not be induced. Thus, replacing $xz$ by $xuz$ gives an induced $P_6$ in $G$, a contradiction.

    Now, assume $G'$ is obtained by applying $D_5$. Let $xyzwv$ be the $5$-cycle formed by the five neighbors of $u$. without loss of generality, let $xz$ and $xw$ be the two diagonal edges that are added after deleting $u$. Let $P$ be an induced copy of $P_5$ in $G'$. Then, it must contain exactly one of $xz$ and $xw$, as otherwise it would not be an induced copy. Likewise, $P$ cannot use any of $y$ and $w$. If it does not contain $v$ either, then similarly to the previous case, we obtain a contradiction. If $P$ contains $v$, then it must contain $vxz$ as a segment, then the other vertices of $P$ cannot be adjacent to $x$. Then, we can replace $vxz$ by $vuz$ in $G$ and obtain an induced $P_5$, a contradiction.
\end{proof}

\begin{figure}[h]
        \centering
\begin{center}
    \begin{tikzpicture}
    \node[vertex] (a1) {};
    \node[vertex] (b1) [right=4cm of a1] {};
\node [vertex] (c1) [above right=0.7cm and 2cm of a1] {};
\node [vertex] (d1) [above right=1.3cm and 2cm of a1] {};
\node [vertex] (e1) [above right=1.8cm and 2cm of a1] {};
\node [vertex] (f1) [above right=3cm and 2cm of a1] {};

\node [vertex] (g1) [above right=2cm and 1.65cm of a1] {};
\node [vertex] (h1) [above right=2cm and 2.35cm of a1] {};

\draw[line width=0.5pt] (a1) -- (b1) -- (c1)-- (a1)-- (d1) -- (b1) -- (e1) -- (a1) -- (f1) -- (b1);
\draw[line width=0.5pt] (c1) -- (d1)-- (e1) -- (f1);
\draw[line width=0.5pt] (a1) -- (g1)-- (e1) -- (h1) -- (b1);
\draw[line width=0.5pt] (g1) -- (f1)-- (h1) -- (b1);

\node[] at (2.2,-0.7) {$A$};


\node[vertex] (a2) [right=5cm of a1] {};
\node[vertex] (b2) [right=4cm of a2] {};
\node [vertex] (c2) [above right=0.6cm and 2cm of a2] {};
\node [vertex] (d2) [above right=1.1cm and 2cm of a2] {};
\node [vertex] (e2) [above right=1.9cm and 2cm of a2] {};
\node [vertex] (f2) [above right=3cm and 2cm of a2] {};

\node [vertex] (g2) [above right=2cm and 1.65cm of a2] {};
\node [vertex] (h2) [above right=1.25cm and 2.35cm of a2] {};

\draw[line width=0.5pt] (a2) -- (b2) -- (c2)-- (a2)-- (d2) -- (b2) -- (e2) -- (a2) -- (f2) -- (b2);
\draw[line width=0.5pt] (c2) -- (d2)-- (e2) -- (f2);
\draw[line width=0.5pt] (a2) -- (g2)-- (e2) -- (h2) -- (b2);
\draw[line width=0.5pt] (g2) -- (f2);
\draw[line width=0.5pt] (h2) -- (d2);

\node[] at (7.2,-0.7) {$B$};


\node[vertex] (a3) [right=10cm of a1] {};
\node[vertex] (b3) [right=4cm of a3] {};
\node [vertex] (c3) [above right=0.6cm and 2cm of a3] {};
\node [vertex] (d3) [above right=1.4cm and 2cm of a3] {};
\node [vertex] (e3) [above right=1.9cm and 2cm of a3] {};
\node [vertex] (f3) [above right=3cm and 2cm of a3] {};

\node [vertex] (g3) [above right=2cm and 1.65cm of a3] {};
\node [vertex] (h3) [above right=0.8cm and 2.35cm of a3] {};

\draw[line width=0.5pt] (a3) -- (b3) -- (c3)-- (a3)-- (d3) -- (b3) -- (e3) -- (a3) -- (f3) -- (b3);
\draw[line width=0.5pt] (c3) -- (d3)-- (e3) -- (f3);
\draw[line width=0.5pt] (a3) -- (g3)-- (e3) -- (d3) -- (h3) -- (b3);
\draw[line width=0.5pt] (g3) -- (f3);
\draw[line width=0.5pt] (h3) -- (c3);

\node[] at (12.2,-0.7) {$C$};


\node[vertex] (a4) [below=5cm of a1] {};
\node[vertex] (b4) [right=4cm of a4] {};
\node [vertex] (c4) [above right=0.6cm and 2cm of a4] {};
\node [vertex] (d4) [above right=1.1cm and 2cm of a4] {};
\node [vertex] (e4) [above right=1.9cm and 2cm of a4] {};
\node [vertex] (f4) [above right=2.5cm and 2cm of a4] {};

\node [vertex] (g4) [above right=1.25cm and 1.65cm of a4] {};
\node [vertex] (h4) [above right=1.25cm and 2.35cm of a4] {};

\draw[line width=0.5pt] (a4) -- (b4) -- (c4)-- (a4)-- (d4) -- (b4) -- (e4) -- (a4) -- (f4) -- (b4);
\draw[line width=0.5pt] (c4) -- (d4)-- (e4) -- (f4);
\draw[line width=0.5pt] (a4) -- (g4)-- (e4) -- (h4) -- (b4);
\draw[line width=0.5pt] (g4) -- (d4) -- (h4) -- (b4);

\node[] at (2.2,-5.7) {$D$};

\node[vertex] (a5) [right=5cm of a4] {};
\node[vertex] (b5) [right=4cm of a5] {};
\node [vertex] (c5) [above right=0.6cm and 2cm of a5] {};
\node [vertex] (d5) [above right=0.9cm and 2.5cm of a5] {};
\node [vertex] (e5) [above right=1.5cm and 2.5cm of a5] {};
\node [vertex] (f5) [above right=3cm and 2cm of a5] {};

\node [vertex] (g5) [above right=1.9cm and 2cm of a5] {};
\node [vertex] (h5) [above right=1.25cm and 1.35cm of a5] {};

\draw[line width=0.5pt] (a5) -- (b5) -- (c5)-- (a5)-- (f5) -- (b5);
\draw[line width=0.5pt] (c5) -- (d5)-- (e5) -- (g5) -- (h5) -- (c5);
\draw[line width=0.5pt] (a5) -- (h5)-- (f5) -- (g5) -- (c5);
\draw[line width=0.5pt] (f5) -- (e5) -- (b5) -- (d5) -- (g5);

\node[] at (7.2,-5.7) {$E$};

\node[vertex] (a6) [right=10cm of a4] {};
\node[vertex] (b6) [right=4cm of a6] {};
\node [vertex] (c6) [above right=0.5cm and 2cm of a6] {};
\node [vertex] (d6) [above right=1cm and 2cm of a6] {};
\node [vertex] (e6) [above right=2cm and 2cm of a6] {};
\node [vertex] (f6) [above right=3cm and 2cm of a6] {};

\node [vertex] (g6) [above right=1.5cm and 2.55cm of a6] {};
\node [vertex] (h6) [above right=1.5cm and 1.45cm of a6] {};

\draw[line width=0.5pt] (a6) -- (b6) -- (c6)-- (a6)-- (d6) -- (b6) -- (f6) -- (a6) -- (h6) -- (f6) -- (g6) -- (b6);
\draw[line width=0.5pt] (h6) -- (g6);
\draw[line width=0.5pt] (f6) -- (e6) -- (h6) -- (d6) -- (g6) -- (e6);
\draw[line width=0.5pt] (c6) -- (d6);

\node[] at (12.2,-5.7) {$F$};

\end{tikzpicture}
 \end{center}   
        \caption{The $8$-vertex triangulations with no induced $P_5$.}
        \label{fig:8trnoP5}
    \end{figure}
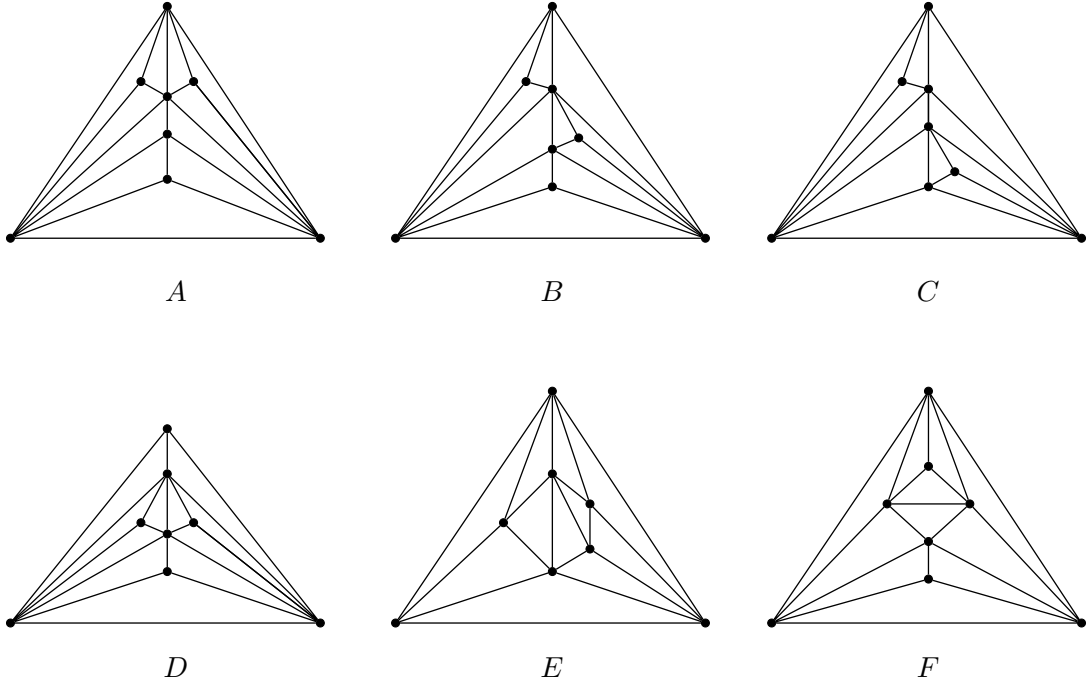

\begin{lemma}\label{>9tr>indP5}
    Any triangulation on at least $9$ vertices contains an induced $P_5$.
\end{lemma}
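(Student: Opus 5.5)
The proof will be a minimal-counterexample argument that uses the operations $D_4$ and $D_5$ together with Lemma \ref{DkeepsP5ind-freeness}. Suppose the statement fails. Among all triangulations on at least $9$ vertices with no induced $P_5$, pick one, $G$, with the fewest vertices, $n\geq 9$.

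\textbf{Reduction to $n=9$.} Since $G$ is a triangulation with $n\geq 4$, every vertex has degree at least $3$, and the neighborhood of each vertex $u$ induces a graph containing a Hamiltonian cycle, namely the link of $u$.
\begin{itemize}
\item If $d(u)\le 5$ and $N(u)$ induces exactly a $C_4$ or a $C_5$, then $D_4$ or $D_5$ (contracting an edge at $u$) gives a triangulation on $n-1$ vertices. By Lemma \ref{DkeepsP5ind-freeness} it is still $P_5^\ind$-free. If $n\ge 10$, this contradicts minimality.
\item If $N(u)$ has a chord, i.e.\ the link cycle has a chord, then that chord together with $u$ gives a separating triangle.
\end{itemize}
So in a minimal $G$ with $n\ge 10$, every vertex of degree at most $5$ lies in a separating triangle. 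Separating triangles are handled by splitting $G$ along a separating triangle $T$ into two triangulations $G_1,G_2$ that share $T$. Each is an induced subgraph of $G$, so each is $P_5^\ind$-free. By minimality, each has at most $8$ vertices, so $n\le 13$. Then I will build an induced $P_5$ directly: take a vertex $a\in G_1\setminus T$ and $b\in G_2\setminus T$ that are far from each other, and route a path through $T$. This step is where induced paths that pass through the triangle give us length. Using both sides, one side gives a $P_3$ ending at $T$ while avoiding the other two triangle vertices, and the other side gives a $P_3$ starting there.

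If $G$ has no separating triangle, it is $4$-connected and $\delta\ge 4$, so some vertex has degree $4$ or $5$ with an induced cycle link, and the contraction always applies. Hence it suffices to treat $n=9$, together with the separating-triangle configurations.

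\textbf{Base case $n=9$.} If a $9$-vertex $P_5^\ind$-free triangulation $G$ has a vertex $u$ where $D_4$ or $D_5$ applies, then $G'$ is one of the six $8$-vertex triangulations $A$--$F$ of Figure \ref{fig:8trnoP5}, which I take as given from an exhaustive check. Reversing the operation means splitting a vertex of $G'$ of degree at least $5$ along an edge, in all possible ways. I will check each split of $A$--$F$ and exhibit an induced $P_5$ in each. Otherwise every vertex of degree at most $5$ lies in a separating triangle, and the gluing argument above applies with small pieces ($K_4$, the octahedron, etc.).

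\textbf{Main obstacle.} I expect the main obstacle to be the finite case analysis: the vertex splittings of the six graphs, and the gluing of small triangulations along a triangle. I would first try to shorten it using symmetry, and by noting that a vertex of degree at least $6$ in a $P_5^\ind$-free triangulation forces its link, an induced $C_{\ge 6}$-free cycle structure, to have many chords. That in turn forces many separating triangles through it, which yields an induced $P_5$ quickly.
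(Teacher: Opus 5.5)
There is a genuine gap, and it is exactly the part you flag as the main obstacle. You allow $D_4$/$D_5$ only when $N(u)$ induces a chordless cycle. So a minimal counterexample on $9\le n\le 13$ vertices may fall into your separating-triangle case, and there you give no argument, only the heuristic of routing a path through $T$.

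That heuristic already fails in the simplest configuration. If one side of $T$ is a single vertex $a$ (e.g.\ $a$ has degree $3$), then $a$ is adjacent to all of $T$, so no vertex of $T$ starts an induced $P_3$ into that side. You would instead need an induced $P_4$ from a vertex of $T$ into the other side, which is a finite check you have not made. Also, a degree-$3$ vertex is covered by neither of your bullets: its link is a triangle and it lies in no separating triangle. Gluing two $P_5^{\ind}$-free triangulations of up to $8$ vertices along a triangle is a real case analysis, comparable to the proof of Proposition \ref{3conP_5ind}, and it is missing.

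The missing observation is that this case never needs to arise:
\begin{itemize}
\item[(i)] A degree-$3$ vertex can simply be deleted. What remains is an induced subgraph that is a triangulation on $k-1$ vertices.
\item[(ii)] For degree $4$ or $5$, chords of the link only rule out particular diagonals, and planarity always leaves a legal choice. For a link $xyzw$, the edges $xz$ and $yw$ cannot both be present, since otherwise $u,x,y,z,w$ span a $K_5$. For a $5$-link $xyzwv$ with a chord $xz$, both diagonals from $y$ would cross $xz$, so both are absent.
\item[(iii)] The proof of Lemma \ref{DkeepsP5ind-freeness} only uses that the added edges are new, so it applies to these choices.
\end{itemize}
Hence every triangulation on $k\ge 10$ vertices reduces to a triangulation on $k-1$ vertices that is still $P_5^{\ind}$-free, which is the paper's induction.

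For $k=9$, the degree sum is $42<5\cdot 9$, so there is a vertex of degree at most $4$. The base case is then just two finite checks on $A$--$F$: inserting a degree-$3$ vertex into a face, and inverting $D_4$ (subdivide an edge and join the new vertex to the two opposite apexes). No separating-triangle gluing and no $D_5$-splittings are needed.
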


\begin{proof}
    Let $T_k$ be a triangulation on $k\geq 9$ vertices. The proof is by induction on $k$. Let $k=9$. Assume $T_9$ is a triangulation that does not contain an induced $P_5$. If $T_9$ has a vertex $u$ of degree $3$, then $T':=T_9\setminus \{u\}$ is an $8$-vertex triangulation. There are 15 non-isomorphic triangulations on $8$ vertices, and its not difficult to check that there are only $6$ of them that does not contain an induced $P_5$, which are $A, B, C, D, E$ and $F$, given in Figure \ref{fig:8trnoP5}. Thus, $T' \in \{A, B, C, D, E, F\}$. But it is easy to check that adding any vertex of degree $3$ to any of these, the resulting graph will contain an induced $P_5$, a contradiction. Thus, we may assume that $T_9$ has minimum degree $4$. Note that any triangulation of at most 11 vertices has vertex of degree at most 4. Let $u \in V(T_9)$ with $d(u)=4$. Obviously $N(u)$ forms a cycle in $T_9$. Then, apply the operation $D_4$ to obtain $T'$. Clearly, $T'$ is a triangulation on $8$ vertices, and by Lemma \ref{DkeepsP5ind-freeness}, it is $P_5^\ind$-free. Thus, $T'\in \{A, B, C, D, E, F\}$. Note that by definition of the operation $D_4$, $T_9$ is obtained from $T'$ by subdividing an edge $e$ once and joining the new vertex to the other two vertices on the boundary of the two $3$-faces on each side of $e$. Again, it is easy to check that in any of $A, B, C, D, E$ and $F$,  every edge belongs to an induced $P_4$, and hence subdividing it gives an induced $P_5$ in the resulted graph. Thus, $T_9$ contains an induced $P_5$, a contradiction.

    Now, assume that $k>9$, and any triangulation on $k-1$ vertices contains an induced $P_5$. If $T_k$ has a vertex of degree $3$, then deleting it results in a triangulation on $k-1$ vertices, which, by the induction hypothesis, contains an induced $P_5$. Thus, $T_k$ itself contains an induced $P_5$. If $T_k$ has a vertex of degree $4$, apply the operation $D_4$, and if it has a vertex of degree $5$, apply the operation $D_5$. Then the resulted graph is a triangulation on $k-1$ vertices, which contains an induced $P_5$ by the induction hypothesis. Then, by Lemma \ref{DkeepsP5ind-freeness}, $T_k$ contains an induced $P_5$, too.
\end{proof}

In what follows, the following simple observations are repeatedly used. Let $G$ be a planar graph and $X=\{x_1, \ldots, x_k\} \subseteq V(G)$ be a cut-set in $G$. For a component $H$ of $G \setminus X$, and a vertex $v\in X$, we say $P_l$ is an induced path from $v$ into $H$, if $v$ is the starting vertex of $P_l$ and apart from $v$ every other vertex of $P_l$ is in $H$, i.e. $P_l$ is an induced path in $G[\{v\} \cup V(H)]$ with $v$ a starting vertex.

\begin{obs}\label{obs1} 
If $G$ is $k$ connected then each vertex of $X$ has at least one neighbor in any component of $G\setminus X$.
\end{obs}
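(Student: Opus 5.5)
The plan is a direct argument by contradiction: assume some vertex of $X$ has no neighbor in some component of $G\setminus X$, and exhibit a cut-set of size $k-1$, which contradicts the $k$-connectivity of $G$.

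Suppose $G$ is $k$-connected and $X=\{x_1,\ldots,x_k\}$ is a cut-set. Then $G\setminus X$ has at least two components. Let $H$ be one of them, and suppose for contradiction that some $v\in X$ has no neighbor in $H$. Put $X':=X\setminus\{v\}$, a set of $k-1$ vertices.

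The key step is to show that $X'$ still separates $G$. Consider $G\setminus X'$. Its vertex set is that of $G\setminus X$ together with $v$. Vertices of $H$ have all their neighbors inside $V(H)\cup X$, because $H$ is a component of $G\setminus X$. Since $v$ has no neighbor in $H$, every neighbor in $G\setminus X'$ of a vertex of $H$ lies in $H$. So $H$ is a union of components of $G\setminus X'$. In fact it is a single component, being connected.

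Now take any other component $H'$ of $G\setminus X$; it exists because $X$ is a cut-set. Then $H'$ lies in $G\setminus X'$ but is disjoint from $H$. Hence $G\setminus X'$ is disconnected. So $G$ has a cut-set of size $k-1$, contradicting $k$-connectivity.

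The only point needing care, and the closest thing to an obstacle, is ensuring $G\setminus X'$ genuinely has vertices outside $H$, i.e.\ that we do not simply delete everything. This is guaranteed by the second component $H'$ (and by $v$ itself). With that checked, the observation follows immediately.
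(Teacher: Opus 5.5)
Your proof is correct and matches the paper's argument: if $v\in X$ had no neighbor in a component $H$, then $X\setminus\{v\}$ would be a cut-set of size $k-1$, contradicting $k$-connectivity. You also spell out what the paper leaves implicit, namely that $H$ remains a component of $G\setminus(X\setminus\{v\})$ and that vertices outside $H$ survive.
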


    Indeed, if a vertex $v\in X$ has no neighbors in some component of $G \setminus X$, then $X\setminus \{v\}$ would be a cut-set in $G$.

    \begin{obs}\label{obs2}
        If $G$ is $k$-connected and $k \geq 3$, then
    $G\setminus X$ has exactly two components. 
    \end{obs}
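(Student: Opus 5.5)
The plan is to argue by contradiction using Observation \ref{obs1} together with the fact that a planar graph has no $K_{3,3}$ minor (Wagner's theorem, equivalently Kuratowski's theorem). Throughout, $X=\{x_1,\ldots,x_k\}$ is a cut-set of size exactly $k$ in the $k$-connected planar graph $G$, with $k\geq 3$. This is the setting in which Observation \ref{obs1} was stated: it needs that no proper subset of $X$ separates $G$.

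Since $X$ is a cut-set, $G\setminus X$ has at least two components. Suppose, for contradiction, that it has at least three, and pick three of them, $H_1,H_2,H_3$. By Observation \ref{obs1}, every vertex of $X$ has at least one neighbor in each $H_j$. Because $k\geq 3$, we can choose three distinct vertices $x_1,x_2,x_3\in X$.

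Now contract each $H_j$ to a single vertex $h_j$. This is allowed because each $H_j$ is connected, and since the $H_j$ are pairwise disjoint and disjoint from $X$, the contractions do not interfere with one another. In the resulting minor, each $x_i$ is adjacent to each $h_j$, because $x_i$ had a neighbor in $H_j$. Hence $\{x_1,x_2,x_3\}$ and $\{h_1,h_2,h_3\}$ span a $K_{3,3}$ subgraph, so $K_{3,3}$ is a minor of $G$. This contradicts the planarity of $G$.

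If one prefers to avoid minors, the same argument gives a topological $K_{3,3}$ directly. For each pair $(x_i,H_j)$ fix a neighbor of $x_i$ in $H_j$, and fix a vertex $c_j\in H_j$. Connecting the three chosen neighbors to $c_j$ inside $H_j$ produces a subtree of $H_j$, and from it one can extract a subdivided claw centered at a branch vertex of that tree. This yields a subdivision of $K_{3,3}$ in $G$, which is again impossible by Kuratowski's theorem.

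The only delicate point is this last bookkeeping: ensuring the three paths inside each $H_j$ meet at a single branch vertex. Using the minor formulation sidesteps it entirely, so that is the route I would write up. It follows that $G\setminus X$ has exactly two components.
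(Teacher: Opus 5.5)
Your proposal is correct and follows essentially the same route as the paper. Both contract each component of $G\setminus X$ to a single vertex, use Observation~\ref{obs1} to make that vertex adjacent to every vertex of $X$, and observe that three components together with three vertices of $X$ would give a $K_{3,3}$ minor, contradicting planarity.
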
 
    
    Indeed, in each component of $G\setminus X$, contract all the edges repeatedly until it shrinks to one vertex, then by the first observation, this vertex is adjacent to every vertex in $X$. If there were at least three  components, then after the contractions we would have a copy of $K_{3,3}$, which contradicts the fact that contracting edges preserves planarity.

    \begin{obs}\label{obs3}
        If $k\geq 3$, then $x_1, \ldots, x_k$ have at most one common neighbor in each component of $G \setminus X$.
    \end{obs}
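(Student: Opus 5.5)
We argue by contradiction with a contraction argument, in the same way as Observation \ref{obs2}. Suppose some component $H$ of $G\setminus X$ contains two distinct vertices $a,b$, each adjacent to all of $x_1,x_2,x_3$ (for $k\geq 3$ it is enough to use three of the $x_i$). We aim to produce a $K_{3,3}$ minor, which contradicts planarity.

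First, since $H$ is connected, we choose a shortest path $Q$ in $H$ from $a$ to $b$. Second, we take a second component $H'$ of $G\setminus X$; it exists because $X$ is a cut-set. By Observation \ref{obs1} each $x_i$ has a neighbor in $H'$, and $H'$ is connected, so contracting $H'$ to a single vertex $c$ gives a vertex adjacent to $x_1,x_2,x_3$. This step uses the $k$-connectivity hypothesis implicit in the setting of the observations. Third, we would like $\{a,b,c\}$ and $\{x_1,x_2,x_3\}$ to be the two sides of a $K_{3,3}$. The catch is that $a$ and $b$ lie in the same component $H$, and contracting $Q$ would merge them.

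To get around this, we use the plane embedding directly rather than a minor. The vertices $a$ and $b$ together with $x_1,x_2,x_3$ form a subdivision-free $K_{2,3}$. In any plane drawing, its three cycles $a x_i b x_j$ split the plane into three regions. Each region is bounded by a cycle through $a$, $b$ and exactly two of the $x_i$. The connected subgraph $H'$ avoids $a$ and $b$, since those lie in $H$, and it avoids $X$. Hence $H'$ lies in a single region, say the one bounded by $a x_1 b x_2$. But $x_3$ has a neighbor in $H'$, and $x_3$ lies on the boundary of that region only if it is one of $x_1,x_2$, which it is not. So $x_3$ sits strictly inside another region, and an edge from $x_3$ into $H'$ would cross the $K_{2,3}$. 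This is the desired contradiction. Equivalently, we can contract $H'$ to $c$ and note that $K_{2,3}$ plus a vertex $c$ adjacent to $x_1,x_2,x_3$ contains a $K_{3,3}$ on $\{a,b,c\}$ versus $\{x_1,x_2,x_3\}$. That version is a direct $K_{3,3}$ subgraph of the minor, with no need to contract $Q$ at all.

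The second formulation is cleaner and avoids the issue with $Q$ entirely: contract only $H'$, keep $a$ and $b$ as distinct vertices, and read off $K_{3,3}$ with parts $\{a,b,c\}$ and $\{x_1,x_2,x_3\}$. This contradicts planarity, because contraction preserves planarity. The main point to check is that a second component $H'$ exists and that each of the three $x_i$ sends an edge into it. This follows from $X$ being a cut-set and from Observation \ref{obs1}, under the standing assumption that $G$ is $k$-connected with $k\geq 3$.
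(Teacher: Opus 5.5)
Your proof is correct and is essentially the paper's argument: contract another component $H'$ of $G\setminus X$ to a single vertex $c$, which by Observation~\ref{obs1} (under the standing $k$-connectivity assumption) is adjacent to $x_1,x_2,x_3$, and read off a $K_{3,3}$ with parts $\{a,b,c\}$ and $\{x_1,x_2,x_3\}$ in a planar minor. The detour through the shortest path $Q$ is unnecessary, and your face argument on the three faces of the $K_{2,3}$ is a valid minor-free restatement of the same idea.
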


    Otherwise, if they have two common neighbors in a component, say $H_1$. Contract the other component into one vertex, this would give the third common neighbor to give a copy of $K_{3,3}$.

    \begin{obs}\label{obs4} 
    If $G$ is $P_5^\ind$-free, and $v \in X$, then there is no induced $P_4$ from $v$ into any component of $G \setminus X$.
    \end{obs}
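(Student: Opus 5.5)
Suppose, for contradiction, that there is an induced $P_4$, say $v y_1 y_2 y_3$, starting at $v\in X$ with $y_1,y_2,y_3\in V(H)$ for some component $H$ of $G\setminus X$. The plan is to extend this path by one vertex on the side of $v$ away from $H$. Any vertex $z$ lying in a component $H'\neq H$ of $G\setminus X$ with $z$ adjacent to $v$ will do, and this yields the induced $P_5$ $z\,v\,y_1y_2y_3$.

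First we need such a $z$. By Observation \ref{obs2}, in our setting ($G$ is $k$-connected with $k\ge 3$ and $X$ a cut-set of size $k$), $G\setminus X$ has another component $H'\neq H$. By Observation \ref{obs1}, $v$ has at least one neighbor $z\in V(H')$. More generally, it suffices that $X$ is a cut-set and $v$ has a neighbor in some component other than $H$; this is exactly what Observation \ref{obs1} guarantees under the connectivity hypothesis in force.

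Next we check that $z\,v\,y_1y_2y_3$ is induced. The path edges $zv$, $vy_1$, $y_1y_2$, $y_2y_3$ are present. Among the pairs inside $\{v,y_1,y_2,y_3\}$ there are no chords, because the $P_4$ is induced. The remaining pairs are $zy_1$, $zy_2$, $zy_3$. These are non-edges since $z$ and the $y_i$ lie in different components of $G\setminus X$, so no edge joins them. Hence $G$ contains an induced $P_5$, contradicting that $G$ is $P_5^\ind$-free.

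There is no real obstacle here. The only point requiring care is the existence of a neighbor of $v$ outside $H\cup X$, and this is where the connectivity assumption (via Observations \ref{obs1} and \ref{obs2}) is used. I would state explicitly that the observation is applied under the standing hypothesis that $G$ is $k$-connected and $X$ is a $k$-cut.
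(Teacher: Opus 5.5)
Your proof is correct and is essentially the paper's argument: take a neighbor of $v$ in another component of $G\setminus X$ (Observation~\ref{obs1}) and prepend it to the induced $P_4$, using that no edges join distinct components. Citing Observation~\ref{obs2} is unnecessary, since a second component exists simply because $X$ is a cut-set; this matters because the observation is also used with $|X|=2$, but your remark that a cut-set plus Observation~\ref{obs1} suffices already covers it.
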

    
Since, by Observation 1, we can take a neighbor of $x$ in another component and get an induced $P_5$ in $G$.

    \begin{obs}\label{obs5}
        If $G$ is $P_5^\ind$-free, and $v \in X$, then there is no induced $P_3$ from $v$ into two components of $G \setminus X$.
    \end{obs}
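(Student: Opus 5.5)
Suppose, towards a contradiction, that some vertex $v\in X$ has an induced $P_3$ into a component $H_1$ of $G\setminus X$, say $v a_1 a_2$ with $a_1,a_2\in V(H_1)$ and $va_2\notin E(G)$. Suppose also that it has an induced $P_3$ into a different component $H_2$, say $v b_1 b_2$ with $b_1,b_2\in V(H_2)$ and $vb_2\notin E(G)$. The plan is to glue these two paths at $v$ and take the walk $a_2 a_1 v b_1 b_2$. This is a path on five distinct vertices, since $V(H_1)$ and $V(H_2)$ are disjoint and neither contains $v$.

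It remains to check that the path is induced, i.e.\ that none of the non-consecutive pairs is an edge of $G$. There are six such pairs.
\begin{itemize}
\item The pairs $va_2$ and $vb_2$ are non-edges, by the hypothesis that each $P_3$ is induced.
\item The pairs $a_1b_1$, $a_1b_2$, $a_2b_1$ and $a_2b_2$ each have one endpoint in $H_1$ and the other in $H_2$. Since $H_1$ and $H_2$ are distinct components of $G\setminus X$, and none of these four vertices lies in $X$, no such edge exists.
\end{itemize}
Hence $a_2a_1vb_1b_2$ is an induced $P_5$ in $G$, contradicting the assumption that $G$ is $P_5^\ind$-free.

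There is no real obstacle here; the only point to state explicitly is that the two halves lie in different components, which rules out every cross edge. Unlike Observation~\ref{obs4}, this argument does not need $k$-connectivity or Observation~\ref{obs1}, because both halves of the path are given by the hypothesis.
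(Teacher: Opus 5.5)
Your proof is correct and is the argument the paper intends. The paper just says that gluing the two induced $P_3$'s at $v$ obviously gives an induced $P_5$; you have written out the six non-adjacency checks, and your remark that no connectivity hypothesis is needed is accurate.
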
 

    Obviously, if so, we have an induced $P_5$ in $G$.

    \begin{obs}\label{obs6} Let $v\in X$, and $u \in V(H)$ for some component of $G\setminus X$ such that $uv \notin E(G)$. Then, there is an induced $P_3$ from $v$ into $H$.
    \end{obs}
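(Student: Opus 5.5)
The plan is to take a shortest path from $v$ to $u$ inside $G[\{v\}\cup V(H)]$ and use its first three vertices. We work under the standing assumptions of the preceding observations: $G$ is $k$-connected and $X=\{x_1,\ldots,x_k\}$ is a cut-set.

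First, $G[\{v\}\cup V(H)]$ is connected. By Observation \ref{obs1}, $v$ has at least one neighbor in $H$, and $H$ is connected since it is a component of $G\setminus X$. So there is a path from $v$ to $u$ in $G[\{v\}\cup V(H)]$. Choose a shortest one, $Q=v\,w_1\,w_2\cdots w_t$ with $w_t=u$. Every vertex of $Q$ other than $v$ lies in $H$.

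Next, $Q$ is an induced path. Any chord of $Q$ would give a strictly shorter $v$--$u$ path in the same subgraph. Since $uv\notin E(G)$, we have $t\geq 2$, so $Q$ has at least three vertices. Its initial segment $v\,w_1\,w_2$ is a subpath of an induced path, hence itself induced: $vw_2\notin E(G)$ by minimality. Therefore $v\,w_1\,w_2$ is an induced $P_3$ from $v$ into $H$, as required.

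The argument is routine. The only point needing care is the connectivity hypothesis, which guarantees that $v$ has a neighbor in $H$ via Observation \ref{obs1}. Without it, no path from $v$ into $H$ need exist at all.
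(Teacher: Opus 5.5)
Your proof is correct and proves the observation as literally stated. It starts where the paper does, with a shortest $v$--$u$ path through $H$, but finishes differently. The paper argues that this shortest path itself has exactly three vertices. It cannot have two, since $uv\notin E(G)$, and it cannot be longer by Observation~\ref{obs4}, which uses that $G$ is $P_5^{\ind}$-free. (The paper only says "cannot have $4$ vertices"; excluding longer paths tacitly uses your truncation remark.)

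So the paper obtains a stronger conclusion: the induced $P_3$ ends at the prescribed non-neighbour $u$, i.e.\ $u$ is at distance exactly two from $v$ in $G[\{v\}\cup V(H)]$. You instead truncate the induced shortest path to its first three vertices. This needs only the connectivity input of Observation~\ref{obs1} and no $P_5^{\ind}$-freeness at all, but it gives no control over where the $P_3$ ends.

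Your weaker but more general form is enough for everything the paper later does with Observation~\ref{obs6}. The paper uses it in three places: to reduce Claim~\ref{Clm2indP3} to finding non-neighbours in $X$; inside that claim's proof, to produce an induced $P_4$ from $x_3$ into $H_1$ (phrased there as a $P_3$ "to $u$", though the endpoint is irrelevant); and for the non-neighbour dichotomy at the $2$-cut $X=\{x,y\}$ in the proof of Theorem~\ref{P_5}. Each of these only needs some induced $P_3$ from $v$ into $H$.
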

 
    Since $v$ has a neighbor in $H$, and $H$ is connected, take a path in $H$ that joins the neighbor of $v$ to $u$. Then there is a path from $v$ into $H$ with $u$ as the end vertex. The shortest such path cannot have length one since $u$ is not adjacent to $v$, and cannot have $4$ vertices by Observation 4.

\begin{lemma}\label{3connseptriangle}
    Let $G$ be a $3$-connected $P_5^\ind$-free planar graph with $X=\{x_1,x_2,x_3\}$ a cut-set in $G$. Let $G'$ be obtained from $G$ by adding all the missing edges in $X$. 
    Then $G'$ is also a $P_5^\ind$-free planar graph.
\end{lemma}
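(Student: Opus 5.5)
Two things need checking: that $G'$ is planar, and that adding the missing edges inside $X$ creates no induced $P_5$.

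\textbf{Planarity.} By Observation~\ref{obs2}, $G\setminus X$ has exactly two components $H_1$ and $H_2$. By Observation~\ref{obs1}, each $x_i$ has a neighbor in both. Fix a plane embedding of $G$. Contracting $H_2$ to a single vertex $h$ gives a plane graph in which $h$ is adjacent to $x_1,x_2,x_3$. The three faces around $h$ each contain two of the $x_i$ on their boundary. So in $G$ the region occupied by $H_2$ together with its edges to $X$ leaves room to route each missing edge $x_ix_j$ alongside $H_2$; equivalently, each $x_ix_j$ can be drawn inside the face between consecutive edges from $h$. Hence $G'$ is planar.

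\textbf{No induced $P_5$.} Suppose $P$ is an induced $P_5$ in $G'$. Then $P$ uses at least one new edge, say $x_1x_2$, with $x_1x_2\notin E(G)$.
\begin{itemize}
\item[(a)] $P$ cannot contain all three of $x_1,x_2,x_3$. In $G'$ these form a triangle, so $P$ would not be induced. Hence $P$ uses exactly one edge inside $X$, namely $x_1x_2$, and $x_3\notin V(P)$.
\item[(b)] Remove the edge $x_1x_2$ from $P$. What remains is two subpaths $P_a\ni x_1$ and $P_b\ni x_2$, with $|P_a|+|P_b|=5$. Their vertices other than $x_1,x_2$ lie in $H_1\cup H_2$.
\item[(c)] Since $P$ is induced, no vertex of $P_a$ is adjacent to any vertex of $P_b$ in $G$. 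In particular, $P_a\setminus x_1$ and $P_b\setminus x_2$ cannot lie in the same component while being adjacent.
\item[(d)] To obtain a contradiction, replace $x_1x_2$ by a short connector in $G$. Let $H$ be a component containing no vertex of $P$; if $P$ meets both components, choose the one it meets least. Look for a common neighbor $w\in H$ of $x_1$ and $x_2$, or a short path $x_1 w w' x_2$ in $H$ with all internal vertices in $H$. Then $P_a\, x_1 \cdots x_2\, P_b$ is an induced path in $G$ on at least $5$ vertices, because $H$ has no edges to the other component and the vertices of $P$ lying in $H$ are controlled. Any induced path on at least five vertices contains an induced $P_5$, which contradicts the hypothesis on $G$.
\end{itemize}

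The existence of the short connector comes from the observations. Since $x_1x_2\notin E(G)$, Observation~\ref{obs6} together with Observation~\ref{obs4} shows that every vertex of $H$ is within distance $2$ of $x_1$ inside $G[\{x_1\}\cup V(H)]$. Hence a shortest $x_1$–$x_2$ path through $H$ has at most $3$ internal vertices. If it has one internal vertex we are done immediately.

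\textbf{Main obstacle.} The difficult case is when $P$ has vertices in both $H_1$ and $H_2$. The connector must then pass through a component that already contains vertices of $P$, and it could create chords. I plan to first split by the shape $(|P_a|,|P_b|)\in\{(1,4),(2,3)\}$.
\begin{itemize}
\item In case $(1,4)$, $P_b$ is an induced $P_4$ starting at $x_2$. Its other vertices lie in a single component, since two vertices in different components are non-adjacent. This is exactly what Observation~\ref{obs4} forbids.
\item In case $(2,3)$, $P_b$ is an induced $P_3$ from $x_2$ into one component, and $P_a$ is an edge from $x_1$ into some component. I will take the connector in the component not containing $P_b$. I will then use Observation~\ref{obs5} to rule out the configurations where the resulting path fails to be induced: there is no induced $P_3$ from one vertex into both components.
\end{itemize}
Chords between the connector vertices and $x_3$ are irrelevant, since $x_3\notin P$. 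If the connector has length greater than $2$, I will truncate the path to its first five vertices.
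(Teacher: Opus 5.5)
Your proposal is correct and is essentially the paper's argument. Planarity comes from $H_2$ sitting in a face of $G_1$ that has $x_1,x_2,x_3$ on its boundary. An induced $P_5$ in $G'$ must use exactly one new edge $x_1x_2$ and avoid $x_3$. The one-sided configurations are killed by rerouting through the other component; your $(1,4)$ case via Observation~\ref{obs4} is the paper's ``replace $x_1x_2$ by $x_1u$''. The mixed configuration $P=yx_1x_2ab$ is handled as in the paper: $y$ is a non-neighbor of $x_2$ in its component, so Observation~\ref{obs6} gives a second induced $P_3$ from $x_2$, and with $x_2ab$ this contradicts Observation~\ref{obs5}.

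Requiring the connector's internal vertices to lie in $H$ is slightly more careful than the paper's ``shortest path in $G_2$'', which could otherwise run through $x_3$.
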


\begin{proof}
    Let $X=\{x_1,x_2,x_3\}$ be a cut-set of vertices. Let $H_1$ and $H_2$ be the components of $G\setminus X$. Let $G_1:=G[X \cup V(H_1)]$ and $G_2:=G[X \cup V(H_2)]$. 
    If $x_1x_2x_3$ is a triangle in $G$, then we are done, so we may assume that some edges among the vertices of $X$ are missing. Since $H_2=G \setminus G_1$ and, by Observation \ref{obs1}, every vertex of $X$ has a neighbor in $H_2$ then in a plane drawing of $G$, we have $H_2$ lying in a face of $G_1$ that has $x_1, x_2$ and $x_3$ on its boundary. As there are no edges from the vertices of $H_2$ to the vertices in $H_1$, adding the missing edges between the vertices of $X$ does not violate the planarity of $G$, showing that $G'$ is planar. 
    
    Suppose that $G'$ contains an induced $P_5$. Denote such a copy by $P$. Since $G$ is $P_5^\ind$-free, then $P$ must contain one of the newly added edges, say $x_1x_2\in E(P)$ (that means $x_1x_2$ was missing in $G$). Then, as $x_1x_2x_3$ is a triangle in $G'$, and $P$ is an induced path, $x_3 \notin V(P)$. 
    
    Assume that $V(P)$ lies entirely in one of $G_1$ or $G_2$. Without loss of generality, suppose  it is in $G_1$. If $x_1x_2$ is the last edge of $P$ with, say $x_2$ as the last vertex, then we can replace the edge $x_1x_2$ by $x_1u$ for some neighbor $u \in H_2$ of $x_1$, to get an induced $P_5$ in $G$, a contradiction. If $x_1x_2$ is not the last edge of $P$, then we can replace it by a shortest path joining $x_1$ and $x_2$ in $G_2$ to get an induced path of length at least $5$ in $G$, which contains an induced $P_5$, a contradiction.

    Thus, we may assume that $P$ intersects both $V(H_1)$ and $V(H_2)$. Then $P$ contains $x_1x_2$, an induced path $P_3$ starting from $x_1$ or $x_2$ into one of $H_1$ or $H_2$, and a neighbor of the other vertex of $\{x_1, x_2\}$ in the other component. Without loss of generality, let $P:=uvx_1x_2w$, where $u, v \in V(H_1)$ and $w \in V(H_2)$. Since $P$ is induced, $w$ is not adjacent to $x_1$, 
    so the shortest path that joins $w$ to $x_1$ in $G[H_2 \cup \{x_1\}]$ has length at least two. Then using this path and $x_1vu$ in $G_1$, we can get an induced $P_5$ in $G$, a contradiction.
\end{proof}




\begin{proposition}\label{3conP_5ind}
    Every $3$-connected planar graph on at least 13 vertices contains an induced $P_5$.
\end{proposition}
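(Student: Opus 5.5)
We argue by contradiction: let $G$ be a $3$-connected $P_5^\ind$-free planar graph on $n\geq 13$ vertices, and aim to produce a triangulation on at least $9$ vertices with no induced $P_5$, contradicting Lemma \ref{>9tr>indP5}.

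The strategy is to add edges while preserving three properties: planarity, $3$-connectivity and $P_5^\ind$-freeness. Fix a plane embedding of $G$ and look at a face $F$ of length $\ell\geq 4$.

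\textbf{Step 1: short faces.} Suppose first that $F$ has boundary $x_1x_2\cdots x_\ell$. Any two non-consecutive vertices $x_i,x_j$ of $F$ that are non-adjacent must be at distance exactly $2$ in $G$. Indeed, the boundary of $F$ gives two arcs from $x_i$ to $x_j$, and a shortest $x_i$--$x_j$ path of length at least $4$ already contains an induced $P_5$. So facial cycles are short and "chord-rich."

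\textbf{Step 2: $3$-cuts.} For two non-consecutive, non-adjacent boundary vertices $x_i,x_j$, take a common neighbour $y$ (Step 1). Then $\{x_i,y,x_j\}$ separates the two arcs of $F$ whenever both arcs contain internal vertices; this holds when the arcs are genuinely distinct and the curve through $y$ and $F$ is a Jordan curve. Lemma \ref{3connseptriangle} now lets us add the missing edges of this $3$-cut, in particular $x_ix_j$, keeping the graph planar and $P_5^\ind$-free. Adding edges never destroys $3$-connectivity, and the number of vertices is unchanged.

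\textbf{Step 3: iterate and conclude.} Iterating Step 2 strictly increases the edge count, so it terminates in a graph $G^*$ where no face admits such a separating configuration. If every face of $G^*$ is a triangle, then $G^*$ is a triangulation on $n\geq 13\geq 9$ vertices with no induced $P_5$, contradicting Lemma \ref{>9tr>indP5}.

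\textbf{Main obstacle.} The difficult step is the remaining case, where a face of length $\ge 4$ survives because the Jordan-curve argument fails. Typically this happens when the common neighbour $y$ lies on $F$ itself, or when every common neighbour of $x_i$ and $x_j$ is such that one side of the cut is empty. There I would try a direct argument using Observations \ref{obs1}--\ref{obs6}. For a $4$-face $x_1x_2x_3x_4$ with $x_1x_3,x_2x_4\notin E$, both pairs have common neighbours outside the face. Planarity then forces one of the resulting $3$-sets to be a genuine cut, which reduces this case back to Step 2.

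If the general face analysis gets too messy, a fallback is a counting argument. A $P_5^\ind$-free graph has diameter at most $3$. Combined with $3$-connectivity and Observation \ref{obs3} applied to a vertex of maximum degree, one can try to bound $n\le 12$ directly: every vertex lies within distance $2$ of the maximum-degree vertex $u$, and the second neighbourhood is controlled by Observations \ref{obs4} and \ref{obs5}. The threshold $13$ in the statement suggests that the intended proof is of this counting or structural type, so I would keep this option ready.
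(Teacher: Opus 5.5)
Your argument has a genuine gap, and it is the case you label the ``main obstacle'', which carries essentially all of the difficulty.

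\textbf{Steps 1--2 almost never supply a usable cut.} Step 1 as justified only gives $d(x_i,x_j)\le 3$. The correct fact is stronger. In a $3$-connected plane graph every facial cycle is chordless, since a chord of $F$ would make its two endpoints a $2$-cut. So every non-triangular face is an induced $C_4$ or $C_5$ (not ``chord-rich''), and the common neighbour Step 1 produces is simply a vertex of $F$, which is useless for Step 2. Conversely, whenever some $y\notin V(F)$ is a common neighbour of non-consecutive $x_i,x_j$, your Jordan-curve argument always makes $\{x_i,y,x_j\}$ a cut, because both arcs have interior vertices. So the only obstruction is that all common neighbours of every non-consecutive pair lie on $F$.

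\textbf{Your sketch for the stuck case is false.} You claim both diagonal pairs of a $4$-face have common neighbours off the face. Take the square antiprism: cycles $a_1a_2a_3a_4$ and $b_1b_2b_3b_4$ plus edges $a_ib_i$ and $a_ib_{i+1}$. It is planar and $4$-connected. It is also $P_5^{\ind}$-free: up to symmetry an induced $P_5$ would have to use $a_1,a_2,a_3$ and avoid $N(a_2)\cap B=\{b_2,b_3\}$, and $\{a_1,a_2,a_3,b_1,b_4\}$ induces a $C_5$. Yet $N(a_1)\cap N(a_3)=\{a_2,a_4\}$ and $N(a_2)\cap N(a_4)=\{a_1,a_3\}$. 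More generally, in a $4$-connected plane graph Step 2 can never fire, since $\{x_i,y,x_j\}$ would be a $3$-cut. So for $4$- and $5$-connected non-triangulations your procedure does nothing at all, and $5$-faces are never treated.

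\textbf{What is missing.} You would need a new lemma: a chord can be added inside a chordless $4$- or $5$-face without creating an induced $P_5$, even when no separating triangle exists. Lemma \ref{3connseptriangle} does not give this. Note also that Steps 1--3 never use $n\ge 13$. A completed version would therefore show that every $3$-connected $P_5^{\ind}$-free planar graph has at most $8$ vertices, via Lemma \ref{>9tr>indP5}. That is far stronger than the statement, so the missing step must carry real weight, and it is not clear it is even true.

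\textbf{The fallback is not yet an argument.} $P_5^{\ind}$-freeness gives diameter at most $3$, not that every vertex is within distance $2$ of a maximum-degree vertex. No bound on $n$ is actually derived.

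\textbf{How the paper proceeds.} It never completes to a triangulation. For connectivity exactly $3$, it turns a $3$-cut into a separating triangle via Lemma \ref{3connseptriangle}. It then shows that any side with at least $3$ vertices receives induced $P_3$'s from at least two cut vertices. By pigeonhole and Observation \ref{obs5}, one side therefore has at most $2$ vertices, and a case analysis bounds the other side. The $4$- and $5$-connected cases are reduced to this by deleting a vertex of a minimum cut, which is where the thresholds $11$, $12$, $13$ come from.
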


\begin{proof}
    First, assume that $G$ has at least 11 vertices and has connectivity $3$. Let $X:=\{x_1,x_2,x_3\}$ be a cut-set in $G$. By Lemma \ref{3connseptriangle}, we may assume that $X$ is a separating triangle. Let $H_1$ and $H_2$ be the components of $G \setminus X$, and for $j=1,2$, let $G_j:=G[X \cup V(H_j)]$. 
    
    Towards a contradiction, assume that $G$ does not contain an induced $P_5$. 


    \begin{claim}\label{Clm2indP3}
        For any $i=1,2$, if $|V(H_i)|\geq 3$, then there are at least two vertices of $X$ from which there is an induced $P_3$ into $H_i$.
    \end{claim}

    \begin{proof}
         Assume that $H_1$ contains at least $3$ vertices. By Observation \ref{obs6}, it suffices to show that there are at least two vertices in $X$ that have non-neighbors in $H_1$. 

        By Observation \ref{obs3}, $x_1, x_2$ and $x_3$ can have at most one common neighbor in $H_1$, and $H_1$ has at least three vertices, then we immediately obtain that at least one vertex of $X$, say $x_3$, has a non-neighbor in $H_1$. Suppose that $x_1$ and $x_2$ are adjacent to every vertex of $H_1$. Since every vertex of $X$ has a neighbor in $H_1$, let $w$ be the neighbor of $x_3$. Thus, $w$ is a common neighbor of all the three vertices of $X$. This divides the region that contains $H_1$ into three regions $R_i$ bounded by triangles $x_ix_{i+1}w$, for $i=1,2,3$, where $+$ it taken modulo $3$. Note that for each $i=1,2,3$, the vertices in $R_i$ are separated from $x_{i+2}$, and hence cannot be adjacent to $x_{i+2}$. Thus, the rest of the, at least two, other vertices of $H_1$ (apart from $w$) must lie inside the region $R_1$. 
        
        Thus, $\{x_1,x_2,w\}$ is again a separating triangle in $G$ with at least two vertices in the component in its interior region $R_1$. 
        If there is a vertex $u$ inside $R_1$ that is not adjacent to $w$, then by Observation \ref{obs6}, there is an induced $P_3$ from $v$ to $u$, and hence an induced $P_4$ from $x_3$ to $u$, contradicting Observation \ref{obs4}. Thus, every vertex in $R_1$ must be adjacent to $w$.  Hence, there are at least two vertices inside $R_1$, and all of them are adjacent all the three of $x_1, x_2$ and $w$, contradicting Observation \ref{obs3}.
        
        Thus, at least one of $x_1$ and $x_2$ also has a non-neighbor in $H_1$, and we are done. 
    \end{proof}
    Now, let us return to the proof of the Lemma. If each of $H_1$ and $H_2$ has at least $3$ vertices, then by the above claim, and applying the pigeonhole principle, there is a vertex in $X$ from which we can start an induced $P_3$ into $H_1$, and an induced $P_3$ into $H_2$, which contradicts Observation \ref{obs5}.

    Suppose that $|V(H_1)| \leq 2$. Then, $|V(H_2)| \geq 11-3-2=6$. For each $x_i\in X$, let $N_i:=N(x_i) \cap V(H_2)$. 

    \smallskip
    \textbf{CASE 1.} For every $i\in \{1,2,3\}$, $N_i \cap N_{i+1}=\emptyset$. 
    
    Then, every vertex in $N_i$ is at distance two from each of $x_{i+1}$ and $x_{i+2}$. In particular, from each vertex of $X$, we can start an induced $P_3$ into $H_2$, which means each vertex of $H_1$ must be adjacent to every vertex of $X$ (otherwise we can find an induced $P_5$ in $G$), and hence $|V(H_1)|=1$. Thus, in this case, $|V(H_2)|\geq 7$.

    If a vertex $y\in N_i$ is not adjacent to a vertex $z \in N_{i+1}$, then there is an induced path of length at least two that joins them in $H_2$. the internal vertices of this path are either in $N_i$ or $N_{i+1}$, but not both. Thus, there is an induced $P_4$ starting from either $x_i$ or $x_{i+1}$ into $H_2$, a contradiction. Thus, for each $i=1,2,3$, $N_i$ and $N_{i+1}$ induce a complete bipartite graph. If for any $i$, $|N_i|\geq 2$, then $G$ would contain a subdivision of $K_{3,3}$ with one part consisting of two vertices in $N_i$ and the vertex in $H_1$, and the second part consisting of a vertex in $N_{i+1}$, a vertex in $N_{i+2}$ and $x_i$, contradicting the planarity of $G$. Thus, $|N_i|=1$, for every $i=1,2,3$. Thus, $W:=V(H_2) \setminus \cup_{i=1}^3N_i \geq 7-3=4$.


    Let $N_i:=\{y_i\}$, for each $i=1,2,3$. If a vertex $w \in W$ is not adjacent to $y_i$, for some $i$, then it is at distance at least two from $y_i$ in $H_2$, and hence $x_i$ can be the starting vertex of an induced $P_4$ into $H_2$, a contradiction. Hence, every vertex of $W$ is adjacent to every vertex in $\cup_{i=1}^3N_i$. 
    Thus, we would find a subdivision of $K_{3,3}$ in $G$, with $\{y_1,y_2,y_3\}$ as one part, and the other part consists of three vertices in $W$,
    a contradiction.

    \smallskip

\textbf{CASE 2.} For some $i \in \{1,2,3\}$, $N_i \cap N_{i+1}\neq \emptyset$.  

    \textbf{Subcase 2.1.} There is a vertex $w$ that is a common neighbor to all vertices in $X$. 
    
    Then the region containing $H_2$ is divided into three regions $R_i$, bounded by $x_ix_{i+1}w$, for each $i=1,2,3$. Note that if a region $R_i$ is not empty, then its boundary, $x_i x_{i+1} w$, forms a separating triangle in $G$, and hence a cut-set of size three. Since $G$ is 3-connected, each of the boundary vertices must have at least one neighbor inside $R_i$. In particular, $Y:=\{x_i,x_{i+1}, w\}$ is a cut-set and $G\setminus Y$ has a component $F_1$ on the vertices inside $R_i$ and another component $F_2$ with its vertex set consisting of $V(H_1) \cup \{x_{i+2}\}$ and the vertices inside the other two regions $R_{i+1}$ and $R_{i+2}$.
    
    Thus, If a region $R_i$ contains at least $3$ vertices, then the other two regions must be empty and $|V(H_1)|=1$, otherwise $F_2$ would also have at least three vertices, and we can apply Claim \ref{Clm2indP3} as above, to get a contradiction. Without loss of generality, assume that $R_1$ contains at least three vertices. Then, $R_2$ and $R_3$ are empty and $|V(H_1)|=1$, which means $R_1$ contains at least $6$ vertices. Let $V(H_1)=\{u\}$, then $Y=\{x_1, x_2,w\}$, $V(F_2)=\{u,x_3\}$, and $V(F_1)$ is the set of vertices inside $R_1$.

    Since $wx_3u$ is an induced $P_3$ starting from $w$ into $F_2$, $w$ must be adjacent to every vertex of $F_1$ (i.e. inside $R_1$). Thus, if $x_1$ and $x_2$ have a common neighbor inside $R_1$, then it would be a common neighbor of every vertex of $Y$. Let $w'$ be such a common neighbor. This further divides $R_1$ into three regions bounded by triangles $x_1x_2w', x_1ww'$ and $x_2ww'$. Since any vertex inside the region bounded by $x_1x_2w'$ would be separated from $w$, this region must be empty. Let $Q_i$ be the regions bounded by $x_iww'$, for each $i=1,2$. Since there are at least $5$ vertices inside $R_1$ (apart from $w'$), one of $Q_1$ and $Q_2$ contains at least three vertices, say $Q_1$ is such. Then, we again have $Y':=\{x_1,w,w'\}$ is a cut-set of size three in $G$, which is a separating triangle that contains at least three vertices in its interior region and at least three vertices in its exterior (note that each of $x_2, x_3$ and $u$ are in the exterior region). Thus, we can again apply Claim \ref{Clm2indP3} and obtain a contradiction.

    Suppose that for each $i=1,2,3$, the region $R_i$ contains at most two vertices. Since $|V(H_2)|\geq 6$, then for each $i=1,2,3$, the region $R_i$ is not empty, and at least two of them contain two vertices.

    Let $R_1$ be a region that contains two vertices. Since $x_1,x_2$ and $w$ can have at most one common neighbor inside $R_1$, and every vertex inside $R_1$ is adjacent to $w$ (similarly as above), then one of $x_1$ or $x_2$ has a non-neighbor in $R_1$. 
    Let $x_1$ be the vertex with a non-neighbor inside $R_1$. Then, we can start an induced $P_3$ from $x_1$ into $R_1$ (without using $x_2$ and $w$), say $x_1z_1z_2$ is such an induced $P_3$. Let $z$ be a neighbor of $x_3$ inside the region  $R_2$, which exists by Observation \ref{obs1}. Then, $zx_3x_1z_1z_2$ is an induced $P_5$ in $G$, a contradiction.

\smallskip
   \textbf{Subcase 2.2.} No vertex of $H_2$ is a common neighbor of $x_1, x_2$ and $x_3$.   

   Let $a \in N_1 \cap N_2$. Then $a$ is not adjacent to $x_3$. Then the shortest path from $a$ to $x_3$ in $G[V(H_2) \cup \{x_3\}]$ has length 2. 
   Let $x_3ba$ be such an induced path from $x_3$ to $a$.

   Now, $b$ can be adjacent to at most one of $x_1$ and $x_2$. Assume that $b$ is not adjacent to $x_2$. Then, similarly $x_2ab$ is an induced $P_3$. Consider the region $Q$ bounded by the triangle $x_1x_2a$. If $Q$ is not empty, then $x_1x_2a$ is a separating triangle, and hence, by Observation \ref{obs1}, $a$ must have a neighbor $c$ inside $Q$. Then, $cabx_3w$, where $w$ is a neighbor of $x_3$ in $H_1$, is an induced $P_5$ in $G$, a contradiction. Thus, $Q$ must be empty. In particular, $N_1 \cap N_2=\{a\}$, since if there are any other vertices in $N_1 \cap N_2$, they must be in the region $Q$, otherwise they would be separated from $x_1$ or from $x_2$. 

   If $b$ is adjacent to $x_1$, then similarly (this time there would be an induced $P_4$ from $x_2$ into $H_2$) the region bounded by $x_1bx_3$ must be empty, and $N_1 \cap N_3=\{b\}$. 

   Let $Q_1$ and $Q_2$ be the regions bounded by $x_1ab$ and $x_2abx_3$, respectively. If $Q_1$ is not empty, then $x_1ab$ is a separating triangle, which has more than three vertices in its exterior region (evident by $x_2, x_3$ and the vertices in $H_1$). So, if there are at least three vertices inside $Q_1$, then we can apply Claim \ref{Clm2indP3} and obtain a contradiction. Thus, $Q_1$ must contain at most two vertices, and hence $Q_2$ is not empty. But either $a$ or $b$ must have neighbors in $Q_2$, otherwise $\{x_2, x_3\}$ would be a cut-set of size two in $G$, a contradiction. Thus, for each $i=1,2,3$, there is an induced $P_3$ starting from $x_i$ into $H_2$, and hence every vertex in $H_2$ is adjacent to every $x_i$, which implies that $|V(H_1)|=1$. Thus, $|V(H_2)| \geq 7$. 
   
   Note that every vertex in $Q_1$ must be adjacent to both $a$ and $b$, otherwise, we can have an induced $P_4$ starting from $x_2$ (or $x_3$) into $H_2$, a contradiction. If there are two vertices in $Q_1$, then $x_1$ has a non-neighbor in $Q_1$ (otherwise $x_1, a$ and $b$ would have two common neighbors inside $Q_1$, contradicting Observation \ref{obs3}, and hence there is an  induced $P_3$ from $x_1$ into the vertices inside $Q_1$, say $x_1a_1a_2$. Since $|V(H_2)| \geq 7$, then $Q_2$ contains at least $3$ vertices. If no vertex in $Q_2$ is adjacent to any of $x_2$ and $x_3$, then $\{a,b\}$ would be a cut-set of size two in $G$, a contradiction. Thus, we may assume that $x_3$ has a neighbor, say $v$, inside $Q_2$. Then, $vx_3x_1a_1a_2$ is an induced $P_5$ in $G$, a contradiction. Thus, $Q_1$ contains at most one vertex. 

   Hence, $Q_2$ must contain at least $4$ vertices. 
   If a vertex $v$ inside $Q_2$ is not adjacent to any of $a$ and $b$, then there must be a path using only vertices inside $Q_2$ that joins $v$ to $a$ or to $b$. Since otherwise $\{x_2,x_3\}$ would be a cut-set of size two in $G$, a contradiction. Let $P$ be the shortest such path joining $v$ to $a$, then as $v$ is not adjacent to $a$, $P$ has length at least two. Thus, using $x_1a$ and this path we get an induced $P_4$ starting from $x_1$ into $H_2$, a contradiction. Therefore, every vertex inside $Q_2$ is adjacent to $a$ or  $b$. Let $A=N(a) \setminus N(b)$, $B=N(b)\setminus N(a)$ and $C=N(a) \cap N(b)$ inside $Q_2$. By the previous sentence, $A \cup B \cup C$ are all the vertices inside $Q_2$. If a vertex $v \in A$ is not adjacent to $x_3$, then $x_3bav$ is an induced $P_4$ starting from $x_3$ into $H_2$, a contradiction. So, every vertex of $A$ is adjacent to $x_3$. Similarly, every vertex of $B$ is adjacent to $x_2$. We claim that at least one of $A$ and $B$ must be empty. If $v\in A$, then since every vertex in $B$ and $C$ are adjacent to $b$, they must be inside the region bounded by $avx_3b$, as otherwise they would be separated from $b$. But then they are separated from $x_2$, and hence $B$ must be empty. Without loss of generality, we may assume that $A$ is not empty. Now, if $v \in A$, then it is not adjacent to $b$, and hence there is no path joining $v$ to $b$ using only vertices inside $Q_2$, since otherwise we would get an induced $P_4$ from $x_1$ into $H_2$. Thus, there must be a path joining $v$ to $x_2$, otherwise $\{a,x_3\}$ would be a cut-set in $G$. Choose $v \in A$ such that the distance from $v$ to $x_2$ is the shortest. If $v$ is not adjacent to $x_2$, then this distance it at least two. However, the internal vertices of the shortest path joining $v$ to $x_2$ must be adjacent to $a$, otherwise we get an induced $P_3$ from $a$ into $Q_2$, together with $x_1a$ we get an induced $P_4$ from $x_1$ into $H_2$, a contradiction. The vertices on the shortest path joining $v$ to $x_2$ are in the region bounded by $x_2avx_3$, so separated from $b$, and hence they are in $A$ with a shorter distance to $x_2$, contradicting the choice of $v$. Thus, $v$ is a common neighbor of each of $a, x_2$ and $x_3$. Then, all other (at least $3$) vertices inside $Q_2$ must lie in the region $Q_3$ bounded by $avx_3b$, otherwise they would be either separated from $x_3$ or from $b$, and hence in none of $A$ or $C$, a contradiction. Consider a vertex $u$ inside $Q_3$. Since $v \in A$, it is not adjacent to $b$, and hence $u$ cannot be adjacent to both $b$ and $v$, otherwise $x_1buv$ would be an induced $P_4$ from $x_1$ into $H_2$. Also, if $u$ is not adjacent to any of $b$ and $v$, then there is either an induced $P_3$ from $b$ to $u$ or an induced $P_3$ from $v$ to $u$, otherwise $\{a,x_3\}$ would be a cut-set, and hence there is an induced $P_4$ either from $x_1$ or from $x_2$ into $H_2$, a contradiction. Thus, $u$ is either adjacent to $b$ or adjacent to $v$, but not both. If $u$ is adjacent to $b$,  then, $vx_2x_1bu$ is an induced $P_5$ in $G$, and if $u$ is adjacent to $v$, then $uvx_2x_1b$ is an induced $P_5$ in $G$, a contradiction. 
   
   Thus, both $A$ and $B$ must be empty. This implies that every vertex of $Q_2$ are in $C$, the common neighbors of both $a$ and $b$. Recall that at least one of $x_2$ and $x_3$ must have a neighbor inside $Q_2$, otherwise $\{a,b\}$ would be a cut. Assume that $x_2$ has some neighbors inside $Q_2$. If it has at least two neighbors, then we would have two common neighbors of $a, b$ and $x_2$ inside $Q_2$, together with $x_1$, they form a copy of $K_{3,3}$ in $G$, a contradiction. Let $v$ be the only neighbor of $x_2$ inside $Q_2$. Since every vertex inside $Q_2$ is adjacent to both $a$ and $b$, they must lie in the region bounded by $abv$. As there are at least three vertices left, then we have that $abv$ is a separating triangle with at least three vertices both in its interior and in its exterior regions, and hence applying Claim \ref{Clm2indP3}, we obtain a contradiction.

Finally, assume that $b$ is not adjacent to $x_1$ (that is, it is not adjacent to any of $x_1$ and $x_2$). Then, again from each of $x_1, x_2$ and $x_3$ there is an induced $P_3$ into $H_2$, then $|V(H_1)|=1$, and hence $|V(H_2)|\geq 7$. Every vertex of $H_2$ apart from $a$ and $b$ is inside one of the regions $S_1$ bounded by $x_1abx_3$ or $S_2$ bounded by $x_2abx_3$.

If every path, with internal vertices in $H_2$, that joins $b$ to $x_1$ and $x_2$ uses $a$, then $\{a,x_3\}$ would be a cut-set of size two in $G$, a contradiction. Thus, there is a path joining $b$ to $x_1$ or to $x_2$ with internal vertices in $H_2$ and without using $a$. Without loss of generality, assume that there is such a path joining $b$ to $x_1$. Then, $b$ must have a common neighbor with $x_1$, otherwise we would have an induced $P_4$ staring from $x_1$ into $H_2$. Thus, there is $u$ such that $x_1ub$ is a path joining $x_1$ to $b$. Clearly, $u$ must lie inside $S_1$. Now, $u$ is adjacent to $a$, otherwise $x_2abu$ is an induce $P_4$ from $x_2$ into $H_2$, and hence $u$ is also adjacent to $x_3$, otherwise $x_3bau$ is an induced $P_4$ from $x_3$ into $H_2$. Similarly, any vertex in $S_1$ that is adjacent to $a$ or to $b$, must be adjacent to both of them.
Hence, every vertex in $S_1$ must lie in the region bounded by $abu$. So, if $S_1$ has more than one vertex, then $abu$ is a separating triangle. If it has at least two vertices in its interior, one of them is a non-neighbor of $u$, and hence there is an induced $P_3$ from $u$ into this region, which then together with $x_1u$ gives an induced $P_4$ from $x_1$ into $H_2$, a contradiction. Thus, $abu$ has at most one vertex in its interior, which (if exists) is adjacent to each of $a, b$ and $u$. Thus, $S_1$ contains at most two vertices, and hence $S_2$ must contain at least three vertices. 

Note that no vertex in $S_2$ can be adjacent to $b$, otherwise with $x_1ub$ they would form an induced $P_4$ from $x_1$ into $H_2$. If a vertex in $S_2$ is not adjacent to $a$, then there must be a path joining it to $a$ using only vertices inside $S_2$, otherwise $\{x_2,x_3\}$ would be a cut-set in $G$, a contradiction. Then, there is an induced $P_3$ from $a$ into $S_2$, which together with $x_1a$ gives an induced $P_4$ from $x_1$ into $H_2$, a contradiction. Thus, every vertex in $S_2$ is adjacent to $a$. Then, they should all be adjacent to $x_3$ as well, otherwise if $v$ inside $S_2$ is not adjacent to $x_3$, then  $x_3bav$ is an induced $P_4$ starting from $x_3$ into $H_2$, a contradiction. Now, $x_3$ must have a neighbor in $S_2$, otherwise $\{a,x_2\}$ is a cut-set. Also, if there are at least two neighbors of $x_2$ in $S_2$, then they would be common neighbors to all of $x_2, x_3$ and $a$, together with $x_1$, they give a copy of $K_{3,3}$ in $G$, a contradiction. Thus, $x_2$ has exactly one neighbor in $S_2$, say $v$. There are at least two other vertices in $S_2$ apart from $v$, as they are all adjacent to both $a$ and $x_3$, they must lie inside the region $S_3$ bounded by $avx_3b$. Recall that none of them are adjacent to $b$, and if none of them are adjacent to $v$, then again $\{a,x_3\}$ would be a cut. So, $v$ has a neighbor inside $S_3$, say $v'$. Then, $v'vx_2x_1u$ is an induced $P_5$ in $G$, a contradiction. This completes the proof if $G$ has a cut-set of size $3$, and has at least $11$ vertices.

Now, assume that $G$ is $4$-connected and has at least $12$ vertices. Let $X$ be a cut-set of size $4$. Let $v \in X$, then $G\setminus v$ is 3-connected, has $X\setminus \{v\}$ as a cut-set of size three, and has at least $11$ vertices. Then, by the above $G\setminus v$,  and hence $G$, contains an induced $P_5$.

Finally, let $G$ be $5$-connected on at least $13$ vertices, with a cut-set $X$ of size $5$. Then, $G \setminus v$ for some $v \in X$ is $4$-connected on at least $12$ vertices. Thus, by the previous paragraph, $G\setminus v$, and hence $G$ contains an induced $P_5$. This completes the proof.
\end{proof}

\begin{proof}[\textbf{Proof of Theorem \ref{P_5}}]
    If $3 \leq n\leq 8$, then we can have triangulations on $n$ vertices avoiding induced $P_5$: for $n\leq 5$, any triangulations, for $n=6,7$, take $\overline{K}_2+C_{n-2}$, and for $n=8$, any triangulation given in Figure \ref{fig:8trnoP5}. Thus, $\ex_\cP(n, P_5^\ind)=3n-6$, for every $3\leq n\leq 8$. 
    
    Let $n\geq 9$. To prove the lower bound, consider the graph $G$ defined as follows. Take the $8$-vertex triangulation $A$ given in figure \ref{fig:8trnoP5}, and for the rest of $n-8$ vertices take $\lfloor \frac{n-8}{4} \rfloor$ paths on four vertices and a shorter path on the remaining vertices. Join all these paths to the two bottom vertices in $A$. It is easy to see that $G$ is a $P_5^\ind$-free planar graph with $|E(G)|=\lfloor\frac{11n}{4}\rfloor-4$.
    
    Now, it is left to prove the upper bound. By Lemma \ref{>9tr>indP5}, any triangulation on $n$ vertices contains an induced $P_5$, and hence $\ex_\cP(n,P_5^\ind) \leq 3n-7$. Since $3n-7 \leq \lfloor\frac{11n}{4}\rfloor-4$, for every $n \leq 12$, then we are done for every $n \leq 12$.

    Assume that $n \geq 13$, and $G$ is a planar graph on $n$ vertices without containing an induced $P_5$. We will apply induction on $n$. By Proposition \ref{3conP_5ind}, $G$ is not $3$-connected. We may also assume that the minimum degree of $G$ is at least $3$, otherwise we are done by induction. If $G$ is not connected, then we can apply induction on its components. Let $X$ be a minimum cut-set of $G$, $H_1, \ldots, H_k$ be the components  $G\setminus X$. Let $G_i$ be $G[V(H_i) \cup \{v\}]$ and $n_i:= |V(G_i)|$. If $G$ has a cut vertex, i.e. $|X|=1$, then $ \sum_{n=1}^k n_i=n+k-1$. By induction hypothesis $|E(G_i)| \leq \lfloor\frac{11n_i}{4}\rfloor-4$ (note that for every $n \leq 8$, $3n-6 \leq \lfloor\frac{11n}{4}\rfloor-4$). Thus,
    \[
    |E(G)| \leq \sum_{i=1}^k |E(G_i)| \leq \sum_{i=1}^k \lfloor \frac{11n_i}{4}\rfloor-4k \leq \lfloor \frac{11(n+k-1)}{4}\rfloor-4k < \lfloor \frac{11n}{4}\rfloor-4
    \]


    So, we may assume that $G$ is $2$-connected. That is $|X|=2$, say $X=\{x,y\}$.  We may assume that $xy \in E(G)$, since adding the edge $xy$ does not create an induced $P_5$. To see this, assume $xy$ is not an edge but adding it creates an induced $P_5$. Let $P$ be such an induced $P_5$ after adding $xy$. Since $G$ was $P_5^\ind$-free, $xy$ must be an edge of $P$. Assume that $V(P)\subseteq V(G_i)$, for some $i=1,2, \ldots, k$. Then, if $xy$ is the last edge of $P$, say with $y$ being the last vertex, we can replace $xy$ by $xu$ for some neighbor of $u$ in some $H_j$ with $j\neq i$ and get an induced $P_5$ in $G$, a contradiction. If $xy$ is not the last edge of $P$, then we replace $xy$ by the shortest path joining $x$ and $y$ in another  $G_j$ with $j\neq i$ to get an induced path of length at least $6$ in $G$, a contradiction. If $V(P)$ uses vertices in different components, then it must be of the form $uxyvw$, where $u \in V(H_i)$ and $v,w \in V(H_j)$ for some $i \neq j \in \{1,2, \ldots , k\}$. Thus, $v$ and $w$ are not adjacent to $x$ and $u$ is not adjacent to $y$. Take the shortest path from $u$ to $y$ in $H_i$, it has length at least two. Thus, from $y$ we have an induced $P_3$ into each of $H_i$ and $H_j$, and hence an induced $P_5$ in $G$, a contradiction.
    
    Recall that by Observation \ref{obs1}, each of $x$ and $y$ must have a neighbor in each $H_i$ for every $i$, and by Observations \ref{obs5} and \ref{obs6}, 
    if $x$ (or $y$) has a non-neighbor in a component, then it is adjacent to every vertex in all the other components. 
    
    Thus, if $k \geq 3$, then there is a component $H_i$ in which every vertex is adjacent to both of $x$ and $y$. If a vertex $v \in V(H_i)$ has degree  at least $3$ in $H_i$, then $\{v, x, y\}$ would have three common neighbors, which forms a $K_{3,3}$ in $G$, a contradiction. Thus, $G[V(H_i)]$ is connected with maximum degree at most two, and hence it must be a path. Hence, if $|V(H_i)|\geq 5$, it would be an induced $P_5$ in $G$, a contradiction. Therefore, $|V(H_i)| \leq 4$ which forms a path and each vertex is adjacent to each of $x$ and $y$. Then, it is easy to check that the number of edges incident to $V(H_i)$ is at most $\lfloor \frac{11}{4} |V(H_i)|\rfloor$. We can now apply induction on $G\setminus V(H_i)$ and we are done.

    Assume $G$ has at most two components, i.e. $k=2$. If both $x$ and $y$ have non-neighbors in a component, then they are both adjacent to every vertex in the other, and we are done as above. So assume that every vertex of $H_1$ is adjacent to $x$ but $y$ has non-neighbors in $H_1$, and  every vertex in $H_2$ is adjacent to $y$ but $x$ has non-neighbors in $H_2$.

    We show that $H_1$ (and similarly $H_2$) has at most $5$ vertices or contains a set $S$ incident with at most $\lfloor \frac{11}{4}|S|\rfloor$ edges, and we are done by induction. Let $Y$ be the set of non-neighbors of $y$ in $H_1$, and $C=V(H_1)\setminus Y$. Then, every vertex in $C$ is a common neighbor to both $x$ and $y$. Note that a vertex $y_1\in Y$ cannot have more than two common neighbors with $y$, otherwise we would have three common neighbors of $y_1, y$ and $x$, which means a $K_{3,3}$ in $G$, a contradiction.

    If $|Y|=1$, say $y_1$ is the only non-neighbor of $y$. Then it is adjacent to $x$ and must be adjacent to two other vertices, say $c_1$ and $c_2$, otherwise it would have degree at most two. 
    If $G[C]$ contains a cycle, then together with $xy$ they form a subdivision of $K_5$, a contradiction. Then, any path in $G[C]$ is an induced path. Thus, $G[C]$ consists of disjoint paths on at most $4$ vertices. Thus, if $G[C]$ is connected, then it has at most $4$ vertices, and hence $H_1$ has at most $5$ vertices. If $G[C]$ has at least two components, then either $H_2= G[Y \cup C]$ is not connected, which is a contradiction, or the components of $G[C]$ are connected through $y_1$ (the vertex in $Y$). But $y_1$ has degree $2$ in $C$, and hence $G[C]$ can have at most two components. If $|C|\geq 5$, then at least one component of $C$ is an induced $P_3$, and all of its vertices are adjacent to both $x$ and $y$. Hence the neighbor of $y_1$ in this component must an end-vertex of this induced $P_3$, and then using $y_1$ and its neighbor in the other component of $G[C]$, we can get an induced $P_5$ in $G$, a contradiction.
    
    Now assume that $|Y| \geq 2$. If a vertex $y_1\in Y$ has two common neighbors $c_1$ and $c_2$ with $y$, then it cannot be adjacent to any other vertex of $H_1$. Since it can have at most two neighbors in $C$, and if it has a neighbor $y_2\in Y$, then either $y_2y_1c_1y$ or $y_2y_1c_2y$ is an induced $P_4$ from $y$ into $H_1$, or $y_2$ is adjacent to both of $c_1$ and $c_2$. However, in the latter case $y_1, y_2, c_1, c_2$ and $y$ form a subdivision of $K_4$, and hence with $x$, they form a subdivision of $K_5$ in $G$, a contradiction. Also, any other vertex of $Y\setminus \{y_1\}$ is either adjacent to $c_1$ or $c_2$ but not both. Then, any vertex of $Y \setminus \{y_1\}$ is either a common neighbor of $x$ and $c_1$ or a common neighbor of $x$ and $c_2$. Hence, using the same arguments as before, we find a set $S$ among the common neighbors of $x$ and $c_1$ (or $x$ and $c_2$) incident with $\lfloor \frac{11}{4}|S|\rfloor$ edges, and we are done by induction again.

    If no vertex of $Y$ has two common neighbors with $y$ (which will be in $C$), then all vertices of $Y$ are connected to $y$ through one same common neighbor $c$ in $C$. Otherwise, assume that  $Y_1 \subseteq Y$ are the vertices of $Y$ that are connected to $y$ through $c_1 \in C$, and $Y_2\subseteq Y$ be those that are connected to $y$ through $c_2 \in C$, with $c_1 \neq c_2$. Since no vertex in $Y$ has two neighbors in $C$, then no vertex in $Y_1$ is adjacent to $c_2$ and no vertex of $Y_2$ is adjacent to $c_1$. Also, if a vertex $y_1\in Y_1$ is adjacent to a vertex in $y_2 \in Y_2$, then $yc_1y_1y_2$ is an induced $P_4$ from $y$ into $H_1$, a contradiction. Then every vertex of $Y_1$ is a common neighbor of $c_1$ and $x$, and every  vertex in $Y_2$ is a common neighbor of $c_2$ and $x$, and hence similarly to the previous cases we can find the desired set $S$. 

    Thus, we either find a desired set $S$ in $H_1$ or $H_2$, and we are done by induction, or both $H_1$ and $H_2$ has at most $5$ vertices. In this case, $G$ has at most $12$ vertices and we are done.
    \end{proof}

\section{Concluding remarks}
As noted in the introduction for every $k\geq 5$, we have $\ex_\cP(n, \Theta_k^\ind)=3n-6$. Another potential class of graphs to consider is the cycles with all chords added (so they would not contain an induced $C_4$), that is, triangulated $n$-gons. Let $C_{n,2}$ denote the $5$-cycle with its two diagonal edges added to it. We conjecture that $\ex_\cP(n, C_{5,2}^\ind)\leq \frac{48}{17}(n-2)$ for every $n \geq 5$. Note that replacing each $K_4$ block in the construction in Figure \ref{figrTheta4}, by the $6$-vertex triangulation $\overline{K_2}+C_4$, one can obtain a construction for every $n\equiv 36 \ (mode \ 68)$ that is $C_{5,2}^\ind$-free and attains this conjectured bound.

A more challenging problem is to consider forbidding induced paths $P_k$, for $k \geq 6$.  For the case $k=6$, we conjecture the following.
\begin{conj}\label{P_6} For every $n \geq 3$,

    $\ex_\cP(n,P_6^\ind)=\left\{
\begin{array}{lll}
      3n-6 & , & n\leq 19 \\
      \lfloor\frac{50}{17}(n-2)\rfloor+1 & , & n\geq 20
      \end{array} \right.$
\end{conj} 

Note that the $19$-vertex triangulation given in Figure \ref{fig:9trnoP6} does not contain an induced $P_6$. Also, repeatedly deleting a vertex of degree $3$ from it we can obtain a triangulation for each smaller value of $n$ that is  $P_6^\ind$-free. For $n\geq 20$, The following construction is $P_6^\ind$-free and achieves this value. Take $\lfloor \frac{n-2}{17} \rfloor$ copies of the given triangulation below with the edge $ab$ shared among all of them, and another triangulation on the left over vertices together with $a$ and $b$ obtained from the given one by deleting the degree $3$ vertices repeatedly, such that it shares the edge $ab$, too.

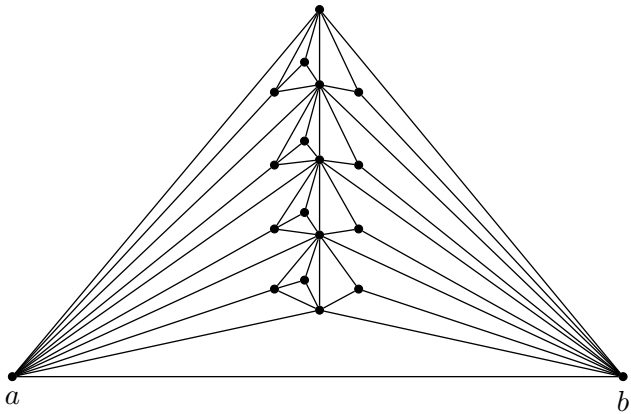
\begin{figure}[h]
        \centering
\begin{center}
    \begin{tikzpicture}
    \node[vertex, label=below:$a$] (a1) {};
    \node[vertex, label=below:$b$] (b1) [right=8cm of a1] {};
\node [vertex] (c1) [above right=0.8cm and 4cm of a1] {};
\node [vertex] (d1) [above right=1.8cm and 4cm of a1] {};
\node [vertex] (e1) [above right=2.8cm and 4cm of a1] {};
\node [vertex] (f1) [above right=3.8cm and 4cm of a1] {};
\node [vertex] (g1) [above right=4.8cm and 4cm of a1] {};
\node [vertex] (h1) [above right=3.7cm and 3.4cm of a1] {};
\node [vertex] (h2) [below=0.85cm of h1] {};
\node [vertex] (h3) [below=1.7cm of h1] {};
\node [vertex] (h4) [below=2.5cm of h1] {};

\node[vertex] (i1) [right=1cm of h1] {};
\node[vertex] (i2) [right=1cm of h2] {};
\node[vertex] (i3) [right=1cm of h3] {};
\node[vertex] (i4) [right=1cm of h4] {};

\node[vertex] (j1) [above right=4.1cm and 3.8 of a1] {};
\node[vertex] (j2) [above right=3.05cm and 3.8 of a1] {};
\node[vertex] (j3) [above right=2.1cm and 3.8 of a1] {};
\node[vertex] (j4) [above right=1.2cm and 3.8 of a1] {};

\draw[line width=0.5pt] (a1) -- (b1) -- (c1)-- (a1)-- (d1) -- (b1) -- (e1) -- (a1) -- (f1) -- (b1) -- (g1)--(a1);
\draw[line width=0.5pt] (c1) -- (d1)-- (e1) -- (f1) -- (g1);
\draw[line width=0.5pt] (g1) -- (h1)-- (f1) -- (h2) -- (e1) -- (h3) -- (d1) -- (h4) -- (c1) -- (i4) -- (d1) -- (i3) -- (e1) -- (i2) -- (f1) -- (i1) -- (g1);
\draw[line width=0.5pt] (g1) -- (j1)-- (h1) -- (a1) -- (h2) -- (j2) -- (e1) -- (j3) -- (h3) -- (a1) -- (h4) -- (j4) -- (c1);
\draw[line width=0.5pt] (j1) -- (f1) -- (j2);
\draw[line width=0.5pt] (j3) -- (d1) -- (j4);
\draw[line width=0.5pt] (i1) -- (b1) -- (i2);
\draw[line width=0.5pt] (i3) -- (b1) -- (i4);
\end{tikzpicture}
 \end{center}   
        \caption{A $19$-vertex triangulation with no induced $P_6$.}
        \label{fig:9trnoP6}
    \end{figure}

\end{document}